\newtheorem{theorem}{Theorem}[section]
\newtheorem{lemma}[theorem]{Lemma}
\newtheorem{proposition}[theorem]{Proposition}
\newtheorem{corollary}[theorem]{Corollary}
\theoremstyle{remark}
\newtheorem*{remak}{Remark}
\begin{document}

\title{
\large{\textbf{On a class of semihereditary crossed-product orders} } }

\author{\bf \sf John S. Kauta}

\date{}

 \maketitle

\begin{abstract}
Let $F$ be a field, let $V$ be a valuation ring of $F$ of arbitrary
Krull dimension (rank), let $K$ be a finite Galois extension of $F$
with group $G$, and let $S$ be the integral closure of $V$ in $K$.
Let $f:G\times G\mapsto K\setminus \{0\}$ be a normalized
two-cocycle such that $f(G\times G)\subseteq S\setminus \{0\}$, but
we do not require that $f$ should take values in the group of
multiplicative units of $S$. One can construct a crossed-product
$V$-algebra $A_f=\sum_{\sigma\in G}Sx_{\sigma}$ in a natural way,
which is a $V$-order in the crossed-product $F$-algebra $(K/F,G,f)$.
If $V$ is unramified and defectless in $K$, we show that $A_f$ is
semihereditary if and only if for all
$\sigma,\tau\in G$ and every maximal ideal $M$ of $S$, $f(\sigma,\tau)\not\in M^2$. If in
addition $J(V)$ is not a principal ideal of $V$, then $A_f$ is
semihereditary if and only if it is an Azumaya algebra over $V$.

\textbf{Keywords:} Crossed-product orders, Semihereditary orders, Hereditary Orders, Azumaya algebras, Dubrovin valuation rings.

2010 \textit{Mathematics Subject Classification. Primary:} 16H05 \and 16H10 \and 16S35 \and 16E60 \and 13F30.
\end{abstract}

\section{Introduction}
\label{sec:intro}
In this paper we study certain orders over valuation rings in central simple
algebras. If $R$ is a ring, then $J(R)$ will
denote its Jacobson radical, $U(R)$ its group of multiplicative
units, and $R^{\#}$ the subset of all the non-zero elements. The
residue ring $R/J(R)$ will be denoted by $\overline{R}$. Given the
ring $R$, it is called \textit{primary} if $J(R)$ is a maximal ideal
of $R$. It is called \textit{hereditary} if one-sided ideals are
projective $R$-modules. It is called \textit{semihereditary}
(respectively \textit{B\'{e}zout}) if finitely generated
one-sided ideals are projective $R$-modules (respectively are principal).
Let $V$ be a valuation ring of a field $F$. If $Q$ is
a finite-dimensional central simple $F$-algebra, then a subring $R$
of $Q$ is called an order in $Q$ if $RF=Q$. If in addition
$V\subseteq R$ and $R$ is integral over $V$, then $R$ is called a
$V$-order. If a $V$-order $R$ is maximal among the $V$-orders of $Q$
with respect to inclusion, then $R$ is called a maximal $V$-order
(or just a maximal order if the context is clear). A $V$-order $R$
of $Q$ is called an \textit{extremal} $V$-order (or simply \textit{extremal}
when the context is clear) if for every $V$-order $B$ in $Q$ with
$B\supseteq R$ and $J(B)\supseteq J(R)$, we have $B=R$. If $R$ is an
order in $Q$, then it is called a \textit{Dubrovin valuation ring} of $Q$
(or a \textit{valuation ring} of $Q$ in short) if
it is semihereditary and primary (see \cite{D1,D2}).

In this paper, $V$ will denote a commutative valuation ring of \textit{arbitrary}
Krull dimension (rank). Let $F$ be its field of quotients, let $K/F$
be a finite Galois extension with group $G$, and let $S$ be the
integral closure of $V$ in $K$. If $f\in Z^2(G,U(K))$ is a
normalized two-cocycle such that $f(G\times G)\subseteq S^{\#}$,
then one can construct a ``crossed-product'' $V$-algebra
$$A_f=\sum_{\sigma\in G} Sx_{\sigma},$$ with the usual rules of multiplication
($x_{\sigma}s=\sigma(s)x_{\sigma}$ for all $s\in S,\sigma\in G$ and
$x_{\sigma}x_{\tau}=f(\sigma,\tau)x_{\sigma\tau})$. Then $A_f$ is
associative, with identity $1=x_1$, and center $V=Vx_1$. Further,
$A_f$ is a $V$-order in the crossed-product $F$-algebra
$\Sigma_f=\sum_{\sigma\in G} Kx_{\sigma}=(K/F,G,f)$. Following
\cite{H}, we let $H=\{\sigma\in G\mid f(\sigma,\sigma^{-1})\in
U(S)\}$. Then $H$ is a subgroup of $G$.

In this paper, we will \textit{always} assume that $V$ is unramified
and defectless in $K$ (for the definitions of these terms, see \cite{E}).
By \cite[Theorem 18.6]{E}, $S$ is a finitely generated
$V$-module, hence $A_f$ is always finitely generated over $V$. If
$V_1$ is a valuation ring of $K$ lying over $V$ then $\{\sigma\in
G\mid \sigma(x)-x\in J(V_1)\;\forall\; x\in V_1\}$ is called the
\textit{inertial group} of $V_1$ over $F$. By \cite[Lemma 1]{K3},
the condition that $V$ is unramified and defectless in $K$ is
equivalent to saying that the inertial group of $V_1$ over $F$ is
trivial, since $K/F$ is a finite Galois extension.

These orders were first studied in \cite{H}, and later in \cite{HM}
and \cite{K4}. In \cite{H} and \cite{K4}, only the case when $V$ is
a discrete valuation ring (DVR) was considered. In \cite{K4},
hereditary properties of crossed-product orders were examined. In
\cite{H} and \cite{HM}, valuation ring properties of the
crossed-product orders were explored, and the latter considered the
cases when either $V$ had arbitrary Krull dimension but was
indecomposed in $K$, or $V$ was a discrete finite rank valuation
ring, that is, its value group is
$\mathbb{Z}\oplus\cdots\oplus\mathbb{Z}$. When $V$ is a DVR, then
any $V$-order in $\Sigma_f$ containing $S$ is a crossed-product
order of the form $A_g$ for some two-cocycle $g:G\times G\mapsto
S^{\#}$, with $g$ cohomologous to $f$ over $K$, by \cite[Proposition
1.3]{H}, but this need not be the case in general. While \cite{HM}
considered any $V$-order in $\Sigma_f$ containing $S$, some of which
were not of the type described above and so in that sense its scope
was wider than ours, in this paper we shall only be concerned with
crossed-product orders $A_g$ where $g$ is either $f$ (almost
always), or is cohomologous to $f$ over $K$, that is,  if there are elements
$\{c_{\sigma}\mid \sigma\in G\}\subseteq K^{\#}$ such that
$g(\sigma,\tau)=c_{\sigma}\sigma{(c_{\tau})}c_{\sigma\tau}^{-1}f(\sigma,\tau)$
for all $\sigma,\tau \in G$, a fact denoted by
$g\!\sim_K\! f$.

The purpose of this paper is to generalize the results of \cite{K4}
to the case when $V$ is not necessarily a DVR. The main results of this
paper are as follows: $A_f$ is semihereditary if and only if
for all
$\sigma,\tau\in G$ and every maximal ideal $M$ of $S$, $f(\sigma,\tau)\not\in M^2$;
if $J(V)$ is not a principal ideal of $V$,
then $A_f$ is semihereditary if and only if it is an Azumaya algebra
over $V$. As in \cite{K4}, the utility of these criteria lie in
their simplicity.

Although in our case the valuation ring $V$ need not be a DVR, some of
the steps in the proofs in \cite{H} and \cite{K4} remain valid,
\textit{mutatis mutandis}, owing to the theory developed in
\cite{K1,K2}. We shall take full advantage of this whenever the
opportunity arises. Aside from the difficulties inherit when dealing
with $V$-orders that are not necessarily noetherian, the hurdles
encountered in this theory arise mainly due to the fact that
the two-cocycle $f$ is not assumed to take on values in $U(S)$.

\section{Preliminaries}
\label{sec:pre}

In this section, we gather together various results that will help
us prove the main results of this paper, which are in the next
section. For the convenience of the reader, we have included
complete proofs whenever it warrants, although the arguments are sometimes routine.

 The following lemma is essentially embedded in the
proof of \cite[Proposition 1.8]{K1}, and the remark that follows it.

\begin{lemma} \label{lem:fgextremal-lemma}
Let $A$ be a finitely generated extremal $V$-order in a
finite-dimensional central simple $F$-algebra $Q$.
\begin{enumerate}
    \item If $B$ is a $V$-order of $Q$ containing $A$, then $B$ is
    also a finitely generated extremal order. If in addition $B$ is
    a maximal $V$-order, then it is a valuation ring of $Q$.
    \item If $W$ is an overring of $V$ in $F$ with $V\subsetneqq W$,
     then $WA$ is a valuation ring of $Q$ with center $W$.
\end{enumerate}
\end{lemma}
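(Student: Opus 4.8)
The plan is to derive both parts from the description, developed in \cite{K1,K2}, of a finitely generated extremal $V$-order as a finite intersection of Dubrovin valuation rings of $Q$ lying over $V$, together with a trace-dual argument for the finiteness statements. First I would record the finiteness input. Since $A$ is finitely generated and $V$-torsion-free over the valuation ring $V$, it is a free $V$-module of rank $n=\dim_F Q$, and the reduced trace form $T(x,y)=\mathrm{Trd}(xy)$ is nondegenerate because $Q$ is separable over $F$; hence the dual lattice $\widehat{A}=\{q\in Q\mid T(q,A)\subseteq V\}$ is again free of rank $n$ and contains $A$. Now if $B$ is any $V$-order with $B\supseteq A$, then every $b\in B$ is integral over the integrally closed ring $V$, so the coefficients of its reduced characteristic polynomial lie in $V$; applying this to $ba$ for $a\in A$ gives $T(b,a)\in V$, whence $A\subseteq B\subseteq\widehat{A}$. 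Thus $B$ is squeezed between two free lattices of equal rank, and I would invoke the non-noetherian module theory of \cite{K1} to conclude that an over-order of the finitely generated extremal order $A$ is itself finitely generated; this is the first place where the arbitrary-rank hypothesis genuinely bites.

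For part (1), once $B$ is known to be finitely generated I would use the structure theory to write $A=R_1\cap\cdots\cap R_n$ with the $R_i$ the finitely many Dubrovin valuation rings of $Q$ lying over $V$ that contain $A$, equivalently the localizations of $A$ at its maximal two-sided ideals. By the theory of \cite{K1} the over-orders of such a semihereditary order are exactly the partial intersections, so $B=\bigcap_{i\in I}R_i$ for some $I\subseteq\{1,\dots,n\}$; being itself a finite intersection of Dubrovin valuation rings over $V$, $B$ is again semihereditary and extremal. If in addition $B$ is a maximal $V$-order, then from $B=\bigcap_{i\in I}R_i\subseteq R_{i_0}$ and the fact that each $R_{i_0}$ is integral over $V$ with $R_{i_0}F=Q$ (so is a $V$-order), maximality forces $B=R_{i_0}$; hence $B$ is a single Dubrovin valuation ring, i.e.\ a valuation ring of $Q$.

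For part (2), $WA=W\otimes_V A$ because $A$ is finitely generated, so $Z(WA)=W\otimes_V Z(A)=W\otimes_V V=W$, giving the center. As in part (1), $WA$ is a finitely generated extremal $W$-order, hence semihereditary, and $WA=\bigcap_i WR_i$ with each $WR_i$ a Dubrovin valuation ring of center $W$. The crucial point is primariness: since $V\subsetneq W$, the ring $W$ is the localization $V_P$ at a prime $P\subsetneq J(V)$, so there exists $t\in J(V)\setminus P\subseteq U(W)$ and therefore $J(V)\,WA=WA$. Because the block decomposition $\overline{A}\cong\prod_i\overline{R_i}$ is supported at $J(V)$ (one has $J(V)A\subseteq J(A)$), this collapse identifies all the maximal two-sided ideals of $A$, which lie over $J(V)$, into a single maximal ideal of $WA$ lying over $J(W)=PV_P$; thus $\overline{WA}$ is simple and $WA$ is primary. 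A semihereditary primary order being a Dubrovin valuation ring, $WA$ is a valuation ring of $Q$ with center $W$.

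The main obstacles I anticipate are exactly the two non-formal inputs: the finite generation of an arbitrary over-order $B$ in the non-noetherian setting, which I expect to handle only through the cited theory of \cite{K1,K2} rather than by a soft argument, and the verification that enlarging the center to $W$ merges the finitely many Dubrovin valuation rings over $V$ into one, for which the relation $J(V)\,WA=WA$ is the decisive computation.
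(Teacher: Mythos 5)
Your strategy---reduce everything to a presentation of the finitely generated extremal order as a finite intersection of Dubrovin valuation rings over $V$, with over-orders the partial intersections---is genuinely different from the paper's, but it rests on inputs that are not available at the required level of generality. The paper itself invokes the decomposition of a semihereditary $V$-order into finitely many valuation rings (via \cite[Theorem 2.5]{K2}) only under the \emph{additional} hypothesis that $J(V)$ is principal; for a valuation ring of arbitrary rank this structure theory is not on the table, so both your proof of part (1) and your proof of part (2) are built on a foundation the lemma cannot assume. The paper's route avoids it entirely: semiheredity of $B$ comes directly from \cite[Lemma 4.10]{M} (over-orders of semihereditary orders are semihereditary) and extremality from \cite[Theorem 1.5]{K1}, with no intersection decomposition anywhere.

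Two further steps are gaps in their own right. For the finite generation of $B$, the trace-dual sandwich $A\subseteq B\subseteq\widehat{A}$ buys nothing when $V$ is not noetherian---a submodule of a free module of finite rank need not be finitely generated, which is precisely the difficulty---and you then defer to ``the non-noetherian module theory of \cite{K1}'' without naming a statement. The paper's argument is concrete and uses extremality in a way your proposal never does: $[B/J(B):V/J(V)]$ is finite, so $B=a_1V+\cdots+a_mV+J(B)$, and extremality of $A$ gives $J(B)\subseteq J(A)\subseteq A$ by \cite[Proposition 1.4]{K1}, whence $B=a_1V+\cdots+a_mV+A$ is finitely generated. For part (2), your claim that $J(V)\,WA=WA$ ``collapses'' the several maximal ideals of $A$ into one is the assertion to be proved, not a proof of it. The paper instead takes a single maximal $V$-order $C\supseteq A$, which is a valuation ring by part (1), so that $WC$ is a valuation ring with center $W$, and then shows $WA=WC$ by the one-line computation $WC=WJ(V)C\subseteq WJ(C)\subseteq WA\subseteq WC$, again using $J(C)\subseteq J(A)$. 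Some version of that inclusion of radicals is the decisive input throughout, and it is absent from your proposal.
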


\begin{proof} Let $B$ be a $V$-order containing $A$. By \cite[Proposition 1.8]{K1},
$A$ is semihereditary, hence $B$ is semihereditary by \cite[Lemma
4.10]{M}, and therefore $B$ is extremal by \cite[Theorem 1.5]{K1}.
Since $[B/J(B):V/J(V)]\leq [\Sigma_f:F]<\infty$, there exists
$a_1,a_2,\ldots,a_m\in B$ such that $B=a_1V+a_2V+\cdots +a_mV+
J(B)$. But by \cite[Proposition 1.4]{K1}, $J(B)\subseteq J(A)$,
since $A$ is extremal. Therefore $B=a_1V+a_2V+\cdots +a_mV+ A$,
a finitely generated $V$-order. If, in addition $B$ is a maximal
$V$-order, then by the remark after \cite[Proposition 1.8]{K1}, $B$
is a valuation ring of $Q$.

Now let $W$ be a proper overring of $V$ in $F$. Let $C$ be a maximal
$V$-order containing $A$. Then $C$ is a valuation ring of $Q$, as seen above,
hence $WC$ is a valuation ring of $Q$ with center $W$.
Since $A$ is an extremal $V$-order, we have $J(C)\subseteq J(A)$,
thus $WC=WJ(V)C\subseteq WJ(C)\subseteq WA\subseteq WC$, so that
$WA=WC$. Thus $WA$ is always a valuation ring of $Q$.
\end{proof}

Since $A_f$ is finitely generated over $V$, we immediately have the
following lemma, because of \cite[Proposition 1.8]{K1}, the remark
that follows it, and the fact that B\'{e}zout $V$-orders are maximal
orders by \cite[Theorem 3.4]{M}.

\begin{lemma} \label{lem:extremal-maximal-lemma}
Given the crossed-product order $A_f$,
\begin{enumerate}
\item it is an extremal order if and only if it is
semihereditary.
\item it is a maximal order if and only if it is a
valuation ring, if and only if it is B\'{e}zout.
\end{enumerate}
\end{lemma}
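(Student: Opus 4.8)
The plan is to deduce both parts from the single fact, already recorded in the text, that $A_f$ is a finitely generated $V$-order, applying the machinery of Lemma~\ref{lem:fgextremal-lemma} with $A = A_f$. For part~(1) I would verify the two implications directly. If $A_f$ is extremal, then because it is finitely generated, \cite[Proposition 1.8]{K1} yields that $A_f$ is semihereditary. Conversely, if $A_f$ is semihereditary, then \cite[Theorem 1.5]{K1}---the very result invoked in the proof of Lemma~\ref{lem:fgextremal-lemma}---shows that $A_f$ is extremal. So part~(1) is immediate and needs nothing beyond the finiteness $A_f$ already enjoys.

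For part~(2) I would arrange a cycle of implications. First, if $A_f$ is a maximal $V$-order, then it is trivially extremal (no $V$-order properly contains it, so the defining condition holds vacuously), and hence by the remark following \cite[Proposition 1.8]{K1} the finitely generated maximal order $A_f$ is a valuation ring of $Q$; this is precisely the step executed for the order $B$ at the close of the proof of Lemma~\ref{lem:fgextremal-lemma}. For the reverse implication I would use that a valuation ring of $Q$ is a maximal order, being integrally closed and maximal among $V$-orders in the Dubrovin theory \cite{D1,D2}, which gives the equivalence ``maximal $\iff$ valuation ring''. To incorporate the B\'{e}zout property I would read the remark after \cite[Proposition 1.8]{K1} as supplying that a finitely generated maximal order is B\'{e}zout, and then close the loop with \cite[Theorem 3.4]{M}, which asserts that a B\'{e}zout $V$-order is a maximal order; together these give ``maximal $\iff$ B\'{e}zout'', and the three-way equivalence of part~(2) follows.

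I expect the main obstacle to lie in the direction that produces the B\'{e}zout conclusion: upgrading ``finitely generated one-sided ideals are projective'' to ``finitely generated one-sided ideals are principal'' for the valuation ring $A_f$. The remaining implications are formal once Lemma~\ref{lem:fgextremal-lemma} and the cited structure theory are available, but the passage from projective to principal genuinely exploits the module-finiteness of $A_f$ over the valuation ring $V$ together with the ideal-theoretic description of finitely generated maximal orders encoded in the remark after \cite[Proposition 1.8]{K1}. Accordingly, I would concentrate the argument on confirming that this remark indeed delivers the B\'{e}zout property---and not merely the weaker statement that $A_f$ is a valuation ring---since it is that stronger reading which lets the equivalence in~(2) collapse to the short cycle above.
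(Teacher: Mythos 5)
Your proposal is correct and follows essentially the same route as the paper, which proves this lemma in one line by combining the finite generation of $A_f$ over $V$ with \cite[Proposition 1.8]{K1} and the remark following it (for extremal $\Leftrightarrow$ semihereditary and for maximal $\Rightarrow$ B\'{e}zout valuation ring) and \cite[Theorem 3.4]{M} (B\'{e}zout $\Rightarrow$ maximal). Your extra appeal to \cite[Theorem 1.5]{K1} for the semihereditary $\Rightarrow$ extremal direction matches exactly how the paper uses that result in the proof of Lemma~\ref{lem:fgextremal-lemma}.
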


\begin{lemma} \label{lem:overring-lemma}
Let $W$ be a valuation ring of $F$ such that $V\subsetneqq W$, and
let $R=WS$.
\begin{enumerate}
    \item Then $R$ is the integral closure of $W$ in $K$,
    and $W$ is also unramified and defectless in $K$.
    \item Let $t\in S$ satisfy $t\not\in M^2$ for every maximal ideal
    $M$ of $S$. Then $t\in U(R)$. If in addition $J(V)$ is a non-principal
    ideal of $V$, then $t\in U(S)$.
\end{enumerate}
\end{lemma}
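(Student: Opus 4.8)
The plan is to treat the two statements separately, reducing each to the valuation rings of $K$ lying over $W$ (respectively over $V$).

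For part (1), I would first use that every overring of the valuation ring $V$ in $F$ is a localization: since $V\subsetneqq W$, we have $W=V_P$ for the nonzero prime $P=J(W)\cap V$, and a routine common-denominator computation gives $R=WS=V_P S=S_P$, the localization of $S$ at $V\setminus P$. Because localization commutes with integral closure, $S_P$ is the integral closure of $V_P=W$ in $K$, which is the first assertion. For the ramification claim I would invoke \cite[Lemma 1]{K3}: it suffices to show that the inertial group over $F$ of each valuation ring $W_1$ of $K$ lying over $W$ is trivial. Given such a $W_1$, note that $S\subseteq W_1$ (as $W_1$ is integrally closed and contains $W\supseteq V$), so choosing a maximal ideal $M$ of $S$ with $M\supseteq J(W_1)\cap S$ yields a valuation ring $V_1=S_M$ of $K$ over $V$ with $V_1\subseteq W_1$. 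Then $J(W_1)\subseteq J(V_1)$, so any $\sigma$ in the inertial group of $W_1$ satisfies $\sigma(x)-x\in J(W_1)\subseteq J(V_1)$ for every $x\in V_1\subseteq W_1$; hence $\sigma$ lies in the inertial group of $V_1$ over $F$, which is trivial by hypothesis. Thus $W$ is unramified and defectless in $K$.

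For part (2), I would argue valuation-theoretically. Write $R$ and $S$ as the intersections of the valuation rings of $K$ lying over $W$ and over $V$ respectively, so that $t\in U(R)$ (resp. $t\in U(S)$) is equivalent to $v_1(t)=0$ at every such valuation $v_1$. The first step is to convert the hypothesis $t\notin M^2$ into valuation data: since $S$ is a Pr\"ufer domain its ideals are determined locally, and because $t\in S$ one checks that $t\in M^2$ iff $t\in (J(S_M))^2=J(V_1)^2$, where $V_1=S_M$. In a valuation ring this says exactly that $t\notin M^2$ holds iff $v_1(t)$ is strictly smaller than $\gamma_1+\gamma_2$ for every pair of positive elements $\gamma_1,\gamma_2$ of the value group $\Gamma$ of $V_1$ (which equals that of $V$ since $V$ is unramified).

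Finally I would run a splitting argument. For $t\in U(R)$, fix a valuation ring $W_1$ of $K$ over $W$ and the unique $V_1$ over $V$ with $V_1\subseteq W_1$; since $V\subsetneqq W$ forces $V_1\subsetneqq W_1$, the associated convex subgroup $\Delta$ of $\Gamma$ is nonzero, and $t\in U(W_1)$ amounts to $v_1(t)\in\Delta$. If instead $v_1(t)>\Delta$, choose $0<d\in\Delta$; then $d$ and $v_1(t)-d$ are both positive and sum to $v_1(t)$, contradicting the valuation condition above. Hence $v_1(t)\in\Delta$ for every $W_1$, giving $t\in U(R)$. For the sharper conclusion $t\in U(S)$ under the extra hypothesis, the key observation is that $J(V)$ is non-principal if and only if $J(V)^2=J(V)$, equivalently $\Gamma$ has no least positive element; then every positive $v_1(t)$ splits as a sum of two positive elements of $\Gamma$, again contradicting $t\notin M^2$, so $v_1(t)=0$ at every $V_1$ and $t\in U(S)$. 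The main obstacle is the bookkeeping in this last step: correctly identifying the convex subgroup governing $W_1$ and establishing the principal-versus-idempotent dichotomy for $J(V)$, which is precisely what separates the two conclusions.
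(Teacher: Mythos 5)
Your proposal is correct and follows essentially the same route as the paper: reduce to the chain $V_1\subseteq W_1$ of valuation rings of $K$, identify $t\notin M^2$ with $t\notin J(V_1)^2$ by localizing the Pr\"ufer domain $S$, and then conclude $t\in U(W_1)$ (resp.\ $t\in U(V_1)$ when $J(V)^2=J(V)$). Your value-group phrasing (splitting $v_1(t)$ as a sum of two positive elements) is just the additive form of the paper's ideal-theoretic step $J(W_1)\subsetneqq J(V_1)^2$, and your localization argument for part (1) is an equivalent routine variant of the paper's.
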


\begin{proof} The ring $R$ is obviously integral over $W$. Since it
contains $S$, it is also integrally closed in $K$, hence it is the
integral closure of $W$ in $K$.

Now let $V_1\subseteq W_1$ be valuation rings of $K$ lying over $V$
and $W$ respectively. Then $J(W_1)\subseteq J(V_1)$, hence the
inertial group of $W_1$ over $F$, $\{\sigma\in G\mid \sigma(x)-x\in
J(W_1)\;\forall\; x\in W_1\}$, is contained in the inertial group of
$V_1$ over $F$, $\{\sigma\in G\mid \sigma(x)-x\in J(V_1)\;\forall\;
x\in V_1\}$. Since $V$ is unramified and defectless in $K$, the
latter group is trivial, forcing $W$ to be unramified and defectless
in $K$.

Let $W_1$ be a valuation ring of $K$ lying over $W$, and let $V_1$
be a valuation ring of $K$ lying over $V$ such that $V_1\subseteq
W_1$, as in the preceding paragraph. Let $M=J(V_1)\cap S$, a generic maximal
ideal of $S$. We claim that $M^2=J(V_1)^2\cap S$. To see this, note
that $M^2=(J(V_1)\cap S)(J(V_1)\cap S)\subseteq J(V_1)^2\cap S$, and
$M^2V_1=(J(V_1)\cap S)(J(V_1)\cap S)V_1=J(V_1)^2=(J(V_1)^2\cap
S)V_1$. If $V'$ is an extension of $V$ to $K$ different from $V_1$,
then $M^2V'=V'=(J(V_1)^2\cap S)V'$. Thus $M^2=J(V_1)^2\cap S$ as
desired. If $t\in S$ satisfies $t\not\in M^2$, then $t\not\in
J(V_1)^2$. Since $J(W_1)\subsetneqq J(V_1)^2$, we have $t\in
U(W_1)$. Since $W_1$ was an arbitrary extension of $W$ in $K$, we
conclude that $t\in U(R)$. If $J(V)$ is a non-principal ideal of
$V$, then $J(V_1)^2=J(V_1)$, hence $t\in U(V_1)$ for every such
extension $V_1$ of $V$ to $K$, and we conclude that $t\in U(S)$.
\end{proof}

Part 4 of the following lemma was originally proved in \cite{H} when
$V$ is a DVR. The same arguments work when $V$ is an arbitrary
valuation ring.

\begin{lemma} \label{lem:radical-lemma}
Given a $\sigma\in G$, let $I_{\sigma}=\cap M$, where
the intersection is taken over those maximal ideals $M$ of $S$ for
which $f(\sigma,\sigma^{-1})\not\in M$. Then
\begin{enumerate}
    \item $I_{\sigma}=\{x\in S\mid xf(\sigma,\sigma^{-1})\in
    J(V)S\}$.
    \item $I_{\sigma}^{\sigma^{-1}}=I_{\sigma^{-1}}$.
    \item If $f(\sigma,\sigma^{-1})\not\in M^2$ for every maximal
    ideal $M$ of $S$, then $I_{\sigma}f(\sigma,\sigma^{-1})=J(V)S$.
    \item $J(A_f)=\sum_{\sigma\in G} I_{\sigma}x_{\sigma}$.
\end{enumerate}
 \end{lemma}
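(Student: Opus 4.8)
The common engine behind all four parts is the identity $J(V)S=J(S)=\bigcap_M M$, which holds because $V$ is unramified in $K$. Indeed, for each maximal ideal $M$ of $S$ the localization $S_M$ is the valuation ring $V_1$ of $K$ lying over $V$, with $J(V_1)=MS_M$, and unramifiedness gives $J(V)S_M=J(V)V_1=J(V_1)=MS_M$; since any ideal of $S$ is the intersection of the contractions of its localizations, $J(V)S=\bigcap_M M=J(S)$. I also record that $S$ is semilocal Pr\"ufer, so that membership of an element of $S$ in an ideal may be tested one maximal ideal at a time, and throughout I write $a=f(\sigma,\sigma^{-1})$.

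For Part 1, by the above $xa\in J(V)S$ iff $xa\in M$ for every $M$; if $a\in M$ this is automatic, while if $a\notin M$ it is equivalent to $x\in M$ since $M$ is prime, so $xa\in J(V)S$ iff $x\in\bigcap_{a\notin M}M=I_\sigma$. For Part 2, the normalized cocycle relation evaluated at $(\sigma,\sigma^{-1},\sigma)$ gives $f(\sigma^{-1},\sigma)=\sigma^{-1}(a)$; as $\sigma^{-1}$ is a $V$-automorphism of $S$ it permutes the maximal ideals, with $a\notin M\iff\sigma^{-1}(a)\notin\sigma^{-1}(M)$, so applying $\sigma^{-1}$ to the intersection defining $I_\sigma$ turns it into the one defining $I_{\sigma^{-1}}$.

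For Part 3, Part 1 already gives $I_\sigma a\subseteq J(V)S$, so only the reverse containment is needed, and I would establish equality by comparing localizations. At $M$ with $a\notin M$ one has $I_\sigma S_M=MS_M$ and $a\in U(S_M)$, whence $(I_\sigma a)S_M=MS_M=J(V)S_M$. At $M$ with $a\in M$ the hypothesis $a\notin M^2$, together with $M^2=J(V_1)^2\cap S$ from the proof of Lemma~\ref{lem:overring-lemma}, yields $a\in J(V_1)\setminus J(V_1)^2$; in a valuation ring any such element generates the maximal ideal (if some $b\in J(V_1)$ had $v(b)<v(a)$, an element of value $v(a)-v(b)>0$ exists in $K$ and exhibits $a$ as an associate of an element of $J(V_1)^2$), so $aS_M=J(V)S_M$ while $I_\sigma S_M=S_M$, giving again $(I_\sigma a)S_M=J(V)S_M$. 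Equal localizations and the containment then force $I_\sigma a=J(V)S$.

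Part 4 is the main point. Set $\mathfrak N=\sum_\sigma I_\sigma x_\sigma$. It is a two-sided ideal: the right inclusion $I_\sigma f(\sigma,\tau)\subseteq I_{\sigma\tau}$ is immediate from Part 1 and the cocycle identity $f(\sigma,\tau)f(\sigma\tau,(\sigma\tau)^{-1})=\sigma\!\left(f(\tau,(\sigma\tau)^{-1})\right)f(\sigma,\sigma^{-1})$, while the left inclusion $\tau(I_\sigma)f(\tau,\sigma)\subseteq I_{\tau\sigma}$ reduces, via the cocycle relation and Part 1, to showing $\tau\!\left(w\,f(\sigma,\sigma^{-1}\tau^{-1})\right)f(\tau,\tau^{-1})\in J(S)$ for $w\in I_\sigma$; testing this at each maximal ideal, the only apparently bad prime $M''$ would force $f(\sigma^{-1},\tau^{-1})\in M''$ yet $\sigma^{-1}(f(\tau^{-1},\tau))\notin M''$, which is impossible by the cocycle relation at $(\sigma^{-1},\tau^{-1},\tau)$ and Part 2. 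Since $I_\sigma\supseteq J(S)=J(V)S$ for all $\sigma$, we get $\mathfrak N\supseteq J(V)A_f$, and as $A_f$ is module-finite over the central local ring $V$, $J(V)A_f\subseteq J(A_f)$; hence $J(A_f)$ is the preimage of $J(\overline{A_f})$ under $A_f\to\overline{A_f}=A_f/J(V)A_f=\bigoplus_\sigma\overline{S}\,\bar x_\sigma$, a finite-dimensional $\overline V$-algebra over $\overline S=S/J(S)=\prod_i L_i$, and it remains only to prove $\overline{\mathfrak N}=J(\overline{A_f})$. For $\overline{\mathfrak N}\subseteq J(\overline{A_f})$ I would show $\overline{\mathfrak N}$ is nilpotent; for the reverse I would show $\overline{A_f}/\overline{\mathfrak N}$ is semisimple by analysing it through the idempotents $e_i$, the transitive $G$-action on $\{M_i\}$ with trivial inertia (so each $L_i$ is Galois over $\overline V$ under its decomposition group), and the subgroup $H$, on whose surviving indices $\{\sigma:f(\sigma,\sigma^{-1})\notin M_i\}$ the relevant block carries a genuine, unit-valued crossed product and is therefore semisimple. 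The main obstacle is precisely this residue-level separation of the nilpotent part from the semisimple part: because $f$ need not be unit-valued, $\overline{A_f}$ is a \emph{degenerate} crossed product and the clean crossed-product structure persists only on the $H$-part, so the bookkeeping—carried over from \cite{H} with decomposition groups in place of $G$—is where the real work lies.
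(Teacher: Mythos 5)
Your parts (1)--(3) are correct and follow the same route as the paper: part (1) is the same two-line argument (both directions resting on the identity $J(V)S=\bigcap_M M$, which you rightly derive from unramifiedness); part (2) is the cocycle computation the paper delegates to the Sublemma of \cite{K4}; and part (3) is the paper's localization argument, for which you supply the two details the paper leaves implicit, namely that $M^2=J(S_M)^2\cap S$ (proved inside Lemma~\ref{lem:overring-lemma}) and that an element of $J(S_M)\setminus J(S_M)^2$ generates $J(S_M)$. Your verification that $\mathfrak N=\sum_\sigma I_\sigma x_\sigma$ is a two-sided ideal via the cocycle identities is also sound.

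The gap is in the second half of part (4). Having reduced to the finite-dimensional $\overline V$-algebra $\overline{A_f}=A_f/J(V)A_f$, you still must prove the two claims that actually identify $J(\overline{A_f})$ with $\overline{\mathfrak N}$ --- that $\overline{\mathfrak N}$ is nil (hence nilpotent and inside the radical) and that $\overline{A_f}/\overline{\mathfrak N}$ is semisimple --- and you only announce a plan for these (``I would show \dots''). This is precisely the content of \cite[Proposition 3.1(b)]{H} and \cite[Theorem 10.1]{HLS}, which the paper invokes (together with \cite[Lemma 1.3]{HM}, which gives $J(A_f)=\sum_\sigma(J(A_f)\cap Sx_\sigma)$ directly, a step your route would instead obtain as a by-product) rather than reproving. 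Your plan is the right one --- the surviving components organize into matrix blocks over genuine unit-valued crossed products for the subgroups $H_{M_i}\le D_{M_i}$ --- but the multiplication $(e_i\bar x_\sigma)(e_j\bar x_\tau)=e_i\,\sigma(e_j)\,\bar f(\sigma,\tau)\,\bar x_{\sigma\tau}$ couples the $G$-action on the idempotents with the vanishing pattern of $\bar f$, and the bookkeeping you defer is genuinely the bulk of part (4); as written the argument is not self-contained there.
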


 \begin{proof} Let $x\in S$. Clearly, if $x\in I_{\sigma}$ then $xf(\sigma,\sigma^{-1})\in
 J(V)S$. On the other hand, if $x\not\in I_{\sigma}$ then there
 exists a maximal ideal $M$ of $S$ such that
 $x,f(\sigma,\sigma^{-1})\not\in M$, hence
 $xf(\sigma,\sigma^{-1})\not\in M$, and thus $xf(\sigma,\sigma^{-1})\not\in J(V)S$.

 The second statement is proved in the same manner as
 \cite[Sublemma]{K4}. To see that the third statement holds,
we note that $I_{\sigma}f(\sigma,\sigma^{-1})\subseteq J(V)S$. We
claim that $I_{\sigma}f(\sigma,\sigma^{-1})= J(V)S$. To see this,
let $M$ be a maximal ideal of $S$. If $f(\sigma,\sigma^{-1})\not\in
M$, then $(I_{\sigma}f(\sigma,\sigma^{-1}))S_M=J(S_M)=(J(V)S)S_M$.
On the other hand, if $f(\sigma,\sigma^{-1})\in M$ then, since
$f(\sigma,\sigma^{-1})\not\in M^2$, we have $J(S_M)^2 \subsetneqq
I_{\sigma}f(\sigma,\sigma^{-1})S_M\subseteq J(S_M)$, hence
$I_{\sigma}f(\sigma,\sigma^{-1})S_M=J(S_M)=(J(V)S)S_M$, and thus
$I_{\sigma}f(\sigma,\sigma^{-1})= J(V)S$. By \cite[Lemma 1.3]{HM},
$J(A_f)=\sum_{\sigma\in G}(J(A_f)\cap Sx_{\sigma})$. Therefore the
fourth statement can
be verified in exactly the same manner as \cite[Proposition
3.1(b)]{H}, because of the observations made above.
\end{proof}

The following lemma is a generalization of  \cite[Proposition
1.3]{H}.

\begin{lemma} \label{lem:cross-prod-overring-lemma}
Let $B\subseteq\Sigma_f$ be a $V$-order. There is a normalized
cocycle $g:G\times G\mapsto S^{\#}$, $g\sim_K f$, such that $B=A_g$
(viewed as a subalgebra of $\Sigma_f$ in a natural way) if and only
if $B\supseteq S$ and $B$ is finitely generated over $V$. When this
occurs, $B=\sum_{\sigma\in G}Sk_{\sigma}x_{\sigma}$ for some
$k_{\sigma}\in K^{\#}$.
\end{lemma}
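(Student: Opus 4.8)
The forward implication is routine. If $B=A_g$ for a normalized cocycle $g\sim_K f$ with $g(G\times G)\subseteq S^{\#}$, then, realizing $A_g$ inside $\Sigma_f$ via $y_\sigma=c_\sigma x_\sigma$ where $g(\sigma,\tau)=c_\sigma\sigma(c_\tau)c_{\sigma\tau}^{-1}f(\sigma,\tau)$, one gets $B=\sum_{\sigma\in G}Sc_\sigma x_\sigma$. Taking $c_1=1$ gives $B\supseteq Sy_1=S$, and $B$ is finitely generated over $V$ because $S$ is (by \cite[Theorem 18.6]{E}) and $B$ is a sum of $|G|$ copies of $S$. This already exhibits the asserted shape $B=\sum_\sigma Sk_\sigma x_\sigma$ with $k_\sigma=c_\sigma$.

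The substance is the converse, so assume $B\supseteq S$ and $B$ is finitely generated over $V$. The plan is to show first that $B$ is homogeneous, $B=\bigoplus_{\sigma\in G}(B\cap Kx_\sigma)$, then that each graded piece is a cyclic $S$-module, and finally to read off $g$. For the grading, observe that since $S\subseteq B$, the order $B$ is an $S$-sub-bimodule of $\Sigma_f=\bigoplus_\sigma Kx_\sigma$, and on $Kx_\sigma$ right multiplication by $s\in S$ acts as the scalar $\sigma(s)$ while left multiplication acts as $s$. Because $V$ is unramified and defectless in $K$, the inertial group of every valuation ring of $K$ over $V$ is trivial (\cite[Lemma 1]{K3}), and this residual separation of the automorphisms in $G$ is precisely what lets one project an element $w=\sum_\sigma a_\sigma x_\sigma\in B$ onto each homogeneous component without leaving $B$ --- the mechanism already used to grade $J(A_f)$ in \cite[Lemma 1.3]{HM} and to prove \cite[Proposition 1.3]{H}. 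I expect carrying out this projection in the present non-noetherian setting to be the main obstacle. Incidentally, $B\cap K$ is integral over $V$ and contains $S$, hence equals $S$, so the identity component is $S$.

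Granting the grading, write $B\cap Kx_\sigma=N_\sigma x_\sigma$ with $N_\sigma$ an $S$-submodule of $K$. Applying the $V$-linear homogeneous projections to a finite $V$-generating set of $B$ shows each $N_\sigma$ is finitely generated over $V$, hence over $S$; and $N_\sigma\neq 0$ for every $\sigma$ since $BF=\Sigma_f$. Now $S$, being the integral closure of the valuation ring $V$ in the finite extension $K$, is a Pr\"{u}fer domain with only finitely many maximal ideals, hence a B\'{e}zout domain, so every finitely generated $S$-submodule of $K$ is principal. (This is exactly where the arbitrary-rank case departs from \cite{H,K4}, in which $S$ is a semilocal Dedekind domain, i.e.\ a PID, and principality is automatic.) Therefore $N_\sigma=Sk_\sigma$ for some $k_\sigma\in K^{\#}$, and $B=\sum_{\sigma\in G}Sk_\sigma x_\sigma$, with $N_1=S$ so that we may take $k_1=1$.

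Finally I would set $y_\sigma=k_\sigma x_\sigma$, so $B=\bigoplus_\sigma Sy_\sigma$, and compute $y_\sigma y_\tau=k_\sigma\sigma(k_\tau)k_{\sigma\tau}^{-1}f(\sigma,\tau)\,y_{\sigma\tau}=:g(\sigma,\tau)y_{\sigma\tau}$. By construction $g\sim_K f$ via the family $\{k_\sigma\}$, and $g$ is normalized; it is automatically a two-cocycle, being cohomologous to one. Since $B$ is a ring, $y_\sigma y_\tau\in B\cap Kx_{\sigma\tau}=Sy_{\sigma\tau}$, which forces $g(\sigma,\tau)\in S$ and of course $g(\sigma,\tau)\neq 0$, so $g(G\times G)\subseteq S^{\#}$. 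The identification $y_\sigma\mapsto x_\sigma$ of generators then realizes $B$ as $A_g$, completing the proof.
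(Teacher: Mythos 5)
Your proposal is correct and follows essentially the same route as the paper: decompose $B$ into homogeneous components via \cite[Lemma 1.3]{HM} (the grading step you worry about is exactly what that cited lemma supplies), observe each component is finitely generated hence principal over the B\'{e}zout domain $S$, identify the identity component with $S$ by integrality, and read off the cocycle $g$ from the multiplication of the $k_\sigma x_\sigma$. The extra detail you supply on the forward direction and on why $g(G\times G)\subseteq S^{\#}$ is fine but not needed beyond what the paper records.
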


\begin{proof} Suppose $B\supseteq S$.
By \cite[Lemma 1.3]{HM}, $B=\sum_{\sigma\in G}B_{\sigma}x_{\sigma}$,
where each $B_{\sigma}$ is a non-zero $S$-submodule of $K$. If in
addition $B$ is finitely generated over $V$, then each $B_{\sigma}$
is finitely generated over $V$: if $B=\sum_{i=1}^nVy_i$ then, if we
write $y_i=\sum_{\tau\in G}k_{\tau}^{(i)}x_{\tau}$ with
$k_{\tau}^{(i)}\in K$, we see that $B_{\sigma}$ is generated by
$\{k_{\sigma}^{(i)}\}_{i=1}^{n}$ over $V$. Since $S$ is a
commutative B\'{e}zout domain with $K$ as its field of quotients,
$B_{\sigma}=Sk_{\sigma}$ for some $k_{\sigma}\in K^{\#}$. Thus we
get $B=\sum_{\sigma\in G}Sk_{\sigma}x_{\sigma}$. Since $B$ is integral
over $V$, $B_1=S$ and so we can choose $k_1=1$.
Define $g:G\times
G\mapsto S^{\#}$ by
$g(\sigma,\tau)k_{\sigma\tau}x_{\sigma\tau}=(k_{\sigma}x_{\sigma})(k_{\tau}x_{\tau})$,
as in \cite[Proposition 1.3]{H}. Since $k_1=1$, $g$ is also a normalized
two-cocycle.
The converse is obvious.
\end{proof}

\begin{lemma} \label{lem:Azumaya-lemma}
Suppose $S$ is a valuation ring of $K$. Then the following are
equivalent:
\begin{enumerate}
    \item $J(V)A_f$ is a maximal ideal of $A_f$.
    \item $H=G$.
    \item $A_f$ is Azumaya over $V$.
\end{enumerate}
\end{lemma}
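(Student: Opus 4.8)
The plan is to establish the cycle $(1) \Rightarrow (2) \Rightarrow (3) \Rightarrow (1)$, working throughout with the residue algebra $\overline{A_f} := A_f/J(V)A_f$. Since $V$ is unramified in $K$ and $S$ is a valuation ring of $K$, I would first record that $J(S) = J(V)S$, so that $J(V)A_f = \sum_{\sigma \in G} J(S)x_\sigma$ and $\overline{A_f} = \sum_{\sigma \in G}\overline{S}\,\bar{x}_\sigma$ is a crossed product of the residue extension $\overline{S}/\overline{V}$, which is Galois with group $G$ because the inertial group is trivial, by the induced cocycle $\bar{f}(\sigma,\tau) = \overline{f(\sigma,\tau)}$. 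The multiplication $\bar{x}_\sigma \bar{x}_\tau = \bar{f}(\sigma,\tau)\bar{x}_{\sigma\tau}$ is a classical (nondegenerate) crossed-product multiplication exactly when every $f(\sigma,\tau) \in U(S)$.

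For $(1) \Rightarrow (2)$ I would argue as follows. Because $A_f$ is module-finite over $V$ one has $J(V)A_f \subseteq J(A_f)$, whence $J(\overline{A_f}) = J(A_f)/J(V)A_f$. If $J(V)A_f$ is a maximal two-sided ideal then $\overline{A_f}$ is simple and so $J(\overline{A_f}) = 0$. On the other hand, since $S$ is a valuation ring, Lemma~\ref{lem:radical-lemma} gives $I_\sigma = J(S)$ for $\sigma \in H$ and $I_\sigma = S$ for $\sigma \notin H$, so that $J(A_f) = \sum_{\sigma\in H} J(S)x_\sigma + \sum_{\sigma\notin H} Sx_\sigma$ and hence $J(\overline{A_f}) = \sum_{\sigma\notin H}\overline{S}\,\bar{x}_\sigma$. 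The vanishing of this radical forces the index set $\{\sigma \notin H\}$ to be empty, i.e.\ $H = G$.

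For $(2) \Rightarrow (3)$, the key point will be to upgrade $H = G$ to the statement that the whole cocycle is unit-valued. When $H = G$, each $x_\sigma$ is a unit of $A_f$, since $x_\sigma x_{\sigma^{-1}} = f(\sigma,\sigma^{-1})$ and $x_{\sigma^{-1}}x_\sigma = f(\sigma^{-1},\sigma)$ both lie in $U(S)$; then $f(\sigma,\tau) = x_\sigma x_\tau x_{\sigma\tau}^{-1} \in U(A_f)$. Here I would use that a nonunit of $S$ lies in $J(S) = J(V)S \subseteq J(A_f)$, so $S \cap U(A_f) = U(S)$ and therefore $f(\sigma,\tau) \in U(S)$ for every $\sigma,\tau$. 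Consequently $\overline{A_f} = (\overline{S}/\overline{V},G,\bar{f})$ is an honest crossed-product algebra, hence central simple over the field $\overline{V}$. Since $S$ is finitely generated and torsion-free over the valuation ring $V$ it is $V$-free, so $A_f$ is projective over the local ring $V$ with central simple reduction mod $J(V)$, and therefore Azumaya.

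Finally, for $(3) \Rightarrow (1)$: if $A_f$ is Azumaya over the local ring $V$ then $\overline{A_f} = A_f/J(V)A_f$ is Azumaya, hence central simple, over $\overline{V}$, so in particular it is simple and $J(V)A_f$ is a maximal two-sided ideal. I expect the main obstacle to be the crux of $(2) \Rightarrow (3)$, namely deducing that the entire cocycle takes values in $U(S)$ from the a priori weaker hypothesis $H = G$; this rests on the identification $S \cap U(A_f) = U(S)$, which in turn uses that $S$ is local with $J(S) \subseteq J(A_f)$.
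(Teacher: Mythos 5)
Your proposal is correct, and on the one implication the paper actually writes out, $(1)\Rightarrow(2)$, it arrives at exactly the same punchline: the set $\sum_{\sigma\notin H}\overline{S}\tilde{x}_{\sigma}$ is a two-sided ideal of the simple ring $A_f/J(V)A_f$, hence zero, hence $H=G$. The difference is in how that set is certified to be an ideal: the paper cites \cite[Theorem 10.1(c)]{HLS}, while you identify it as $J(A_f)/J(V)A_f$ by specializing Lemma~\ref{lem:radical-lemma} to the case where $S$ is local (so $I_{\sigma}=J(S)$ for $\sigma\in H$ and $I_{\sigma}=S$ for $\sigma\notin H$). Your route is self-contained within the paper's own preliminaries and slightly stronger in that it computes the radical exactly, at the cost of invoking Lemma~\ref{lem:radical-lemma}(4) rather than a ready-made structural fact. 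The remaining implications $(2)\Rightarrow(3)\Rightarrow(1)$ are left as well known in the paper (compare the treatment of the converse in Proposition~\ref{prop:Azumaya-prop}); your write-up of them is sound, including the key upgrade from $H=G$ to $f(G\times G)\subseteq U(S)$ via $S\cap U(A_f)=U(S)$, which indeed rests on $J(S)=J(V)S\subseteq J(A_f)$. (A marginally shorter route to that upgrade, used elsewhere in the paper, is the cocycle identity $f^{\tau}(\tau^{-1},\tau\gamma)f(\tau,\gamma)=f(\tau,\tau^{-1})$, which exhibits $f(\tau,\gamma)$ as a divisor in $S$ of the unit $f(\tau,\tau^{-1})$.)
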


\begin{proof} Suppose $J(V)A_f$ is a maximal ideal of $A_f$. Note that
$A_f/J(V)A_f=\sum_{\sigma\in G}\overline{S}\tilde{x}_{\sigma}.$ By
\cite[Theorem 10.1(c)]{HLS}, $J=\sum_{\sigma\not\in
H}\overline{S}\tilde{x}_{\sigma}$ is an ideal of $A_f/J(V)A_f$.
Since $A_f/J(V)A_f$ is simple, $J=0$, hence $H=G$. \end{proof}

We set up additional notation, following \cite{H} and \cite{K4}. Let
$L$ be an intermediate field of $F$ and $K$, let $G_L$ be the Galois
group of $K$ over $L$, let $U$ be a valuation ring of $L$ lying over
$V$, and let $T$ be the integral closure of $U$ in $K$. Then one can
obtain a two-cocycle $f_{L,U}:G_L\times G_L\mapsto T^{\#}$ from $f$
by restricting $f$ to $G_L\times G_L$, and embedding $S^{\#}$ in
$T^{\#}$. As before, $A_{f_{L,U}}=\sum_{\sigma\in G_L}Tx_{\sigma}$
is a $U$-order in $\Sigma_{f_{L,U}}=\sum_{\sigma\in
G_L}Kx_{\sigma}=(K/L,G_L,f_{L,U})$, and $U$ is unramified and
defectless in $K$. If $M$ is a maximal ideal of $S$, and $L$ is the
decomposition field of $M$ and $U=L\cap S_M$, then we will denote
$f_{L,U}$ by $f_M$, $A_{f_{L,U}}$ by $A_{f_M}$, $\Sigma_{f_{L,U}}$
by $\Sigma_{f_M}$, $L$ by $K_M$, and the decomposition group $G_L$
by $D_M$, as in \cite{H}. Further, we let $H_M=\{\sigma\in D_M\mid
f_M(\sigma,\sigma^{-1})\in U(S_M)\}$, a subgroup of $D_M$.

Given a maximal ideal $M$ of $S$, let $M=M_1,M_2,\ldots, M_r$ be the
complete list of maximal ideals of $S$, let $U_i=S_{M_i}\cap
K_{M_i}$ with $U=U_1$, and let $(K_i,S_i)$ be a Henselization of
$(K,S_{M_i})$. Let $(F_h,V_h)$ be the unique Henselization of
$(F,V)$ contained in $(K_1,S_1)$. We note that $(F_h,V_h)$ is also a
Henselization of $(K_M,U)$. By \cite[Proposition 11]{HMW}, we have
$S\otimes_VV_h\cong S_1\oplus S_2\oplus\cdots\oplus S_r$.

Part (1) of the following lemma was originally proved in \cite{H} in
the case when $V$ is a DVR. Virtually the same proof holds in the
general case. Part (2)(c) is a generalization of \cite[Corollary 3.11]{H}.

\begin{lemma} \label{lem:primary-lemma}
With the notation as above, we have
\begin{enumerate}
    \item the crossed-product order $A_f$ is primary if and only if
    for every maximal ideal $M$ of $S$ there is a set of right coset
    representatives $g_1,g_2,\ldots, g_r$ of $D_M$ in $G$ (i.e., $G$
    is the disjoint union $\cup_iD_Mg_i)$ such that for all i,
    $f(g_i,g_i^{-1})\not\in M.$
    \item if the crossed-product order $A_f$ is primary, then
    \begin{enumerate}
        \item $A_f\otimes_VV_h\cong M_r(A_{f_{M}}\!\!\otimes_{U}\!\!V_h)$, hence
        \item $A_f/J(A_f)\cong M_r(A_{f_M}/J(A_{f_M}))$, and
        \item $A_f$ is a valuation ring of $\Sigma_f$ if and only if
        $A_{f_M}$ is a valuation ring of $\Sigma_{f_M}$ for some maximal ideal $M$ of $S$.
        When this occurs,
        $A_{f_M}$ is a valuation ring of $\Sigma_{f_M}$ for every maximal ideal $M$ of
        $S$.
        \item $A_f$ is Azumaya over $V$ if and only if
        $H_M=D_M$ for some maximal ideal $M$ of $S$. When this occurs,
        $H_M=D_M$ for every maximal ideal $M$ of $S$.
    \end{enumerate}
\end{enumerate}
\end{lemma}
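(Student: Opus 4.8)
The plan is to route every part through one device: faithfully flat base change to the Henselization $(F_h,V_h)$, where the given splitting $S\otimes_V V_h\cong S_1\oplus\cdots\oplus S_r$ turns $A_f\otimes_V V_h=\sum_{\sigma\in G}(S_1\oplus\cdots\oplus S_r)x_\sigma$ into a crossed product over a \emph{product} ring whose factors are permuted \emph{transitively} by $G$ (transitivity of $G$ on the maximal ideals of $S$, with stabilizer $D_M$ and $r=[G:D_M]$). Writing $e_i$ for the idempotent of $S\otimes_V V_h$ supported on $S_i$, conjugation by $x_\sigma$ carries $e_i$ to $e_j$ exactly as $\sigma$ sends $M_i$ to $M_j$. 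For part (1), I would first compute $\overline{A_f}$ from Lemma~\ref{lem:radical-lemma}: since $J(A_f)=\sum_\sigma I_\sigma x_\sigma$, one gets $\overline{A_f}=\sum_\sigma(S/I_\sigma)\tilde x_\sigma$, and $A_f$ is primary iff this residue ring is simple. The generator $\tilde x_\sigma$ is invertible on exactly those components where $f(\sigma,\sigma^{-1})$ is a unit, i.e. where $f(\sigma,\sigma^{-1})\notin M$, because $x_\sigma x_{\sigma^{-1}}=f(\sigma,\sigma^{-1})$; simplicity of the permuted residue algebra then amounts to the orbit $\{M_i\}$ being linked by such invertible generators, which is precisely the existence of coset representatives $g_1,\dots,g_r$ of $D_M$ in $G$ with $f(g_i,g_i^{-1})\notin M$ for all $i$. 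Only the $G$-action on $\{M_i\}$ and the residues of $f(\sigma,\sigma^{-1})$ enter, so the bookkeeping of \cite[Proposition 3.1]{H} goes through verbatim.

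For part (2)(a), assuming $A_f$ primary, the coset condition supplies for each $i$ an element $x_{g_i}$ that becomes invertible between the $e_1$- and $e_i$-components of $A_f\otimes_V V_h$; from these I would build a full $r\times r$ set of matrix units whose diagonal is $e_1,\dots,e_r$, exhibiting $A_f\otimes_V V_h\cong M_r\bigl(e_1(A_f\otimes_V V_h)e_1\bigr)$. The corner is then identified by noting that, since $U=K_M\cap S_M$ has $S_M$ as its \emph{unique} extension to $K$, the integral closure $T$ of $U$ in $K$ equals $S_M$, so $A_{f_M}=\sum_{\sigma\in D_M}S_M x_\sigma$ and $S_M\otimes_U V_h\cong S_1$; as only $\sigma\in D_M$ fix $e_1$, the corner is $\sum_{\sigma\in D_M}S_1 x_\sigma=A_{f_M}\otimes_U V_h$, giving (a). Part (2)(b) follows by reducing (a) modulo the radical, using that Henselization does not change the residue algebra: $V_h/J(V_h)=V/J(V)=U/J(U)$, the last equality because $K_M$ is the decomposition field and the inertia group is trivial.

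For (2)(c) and (2)(d) I would push each property through the isomorphism of (a). First, (b) gives $\overline{A_f}\cong M_r(\overline{A_{f_M}})$, so $A_f$ primary forces $A_{f_M}$ primary. For (d), Azumaya is Morita invariant and descends along the faithfully flat map $V\to V_h$, so $A_f$ is Azumaya over $V$ iff $A_{f_M}$ is Azumaya over $U$; since $T=S_M$ is a \emph{valuation ring of $K$}, Lemma~\ref{lem:Azumaya-lemma} applies directly to $A_{f_M}$ and yields $A_{f_M}$ Azumaya over $U\iff H_M=D_M$. For (c), Lemma~\ref{lem:extremal-maximal-lemma} reduces being a valuation ring to being semihereditary (given primary), and semiheredity is likewise Morita invariant and stable under the base change, so $A_f$ is a valuation ring iff $A_{f_M}$ is; alternatively one uses that a matrix ring over a Dubrovin valuation ring is again one. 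The ``for some $\iff$ for all'' clause is the easy part: transitivity of $G$ on $\{M_i\}$ conjugates the triples $(D_{M_i},f_{M_i},H_{M_i})$ into one another, so the relevant property holds at one $M_i$ iff at every $M_i$.

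The main obstacle I expect is part (2)(a): constructing the matrix units and verifying the corner identification when $f$ is \emph{not} unit-valued. The transition elements $x_{g_i}$ are genuinely invertible only after the Henselization splits $S$ into the $S_i$ and only on the correct components, and this availability rests entirely on the primary/coset hypothesis $f(g_i,g_i^{-1})\notin M$; tracking the cocycle through the idempotent decomposition and confirming that the corner is exactly $A_{f_M}\otimes_U V_h$ is where the care is needed.
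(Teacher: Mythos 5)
Your proposal is correct and follows essentially the same route as the paper: part (1) reduces to the ideal/coset bookkeeping of Haile's Theorem 3.2 (your reformulation via simplicity of $\overline{A_f}$ computed from Lemma~\ref{lem:radical-lemma} is equivalent to the paper's condition that $A_fTx_{\sigma}A_f=A_f$ whenever $T\not\subseteq I_{\sigma}$), and part (2) is obtained exactly as in the paper by building a full set of matrix units in $A_f\otimes_VV_h$ from the splitting $S\otimes_VV_h\cong S_1\oplus\cdots\oplus S_r$ and the coset representatives, identifying the corner with $A_{f_M}\otimes_UV_h$, and transporting the primary, valuation-ring, and Azumaya properties across this Morita equivalence and the faithfully flat base change. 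You also correctly single out the genuine delicate point, namely constructing the matrix units when $f$ is not unit-valued, which is precisely where the hypothesis $f(g_i,g_i^{-1})\not\in M$ is consumed.
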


\begin{proof} The proof of \cite[Theorem 3.2]{H}, appropriately adapted, works here as well to establish part (1). We outline the argument, for the convenience of the reader: For a $\sigma\in G$, let $I_{\sigma}$ be as in Lemma~\ref{lem:radical-lemma},
and, for a maximal ideal $M$ of $S$, set $\hat{M}:=\cap_{N\;
\textrm{max},\; N\not=M}N$. If $I$ is an ideal of $A_f$ then, by
\cite[Lemma 1.3]{HM}, $I=\sum_{\sigma\in G}(I\cap Sx_{\sigma})$, so
$A_f$ is primary if and only if the following condition holds: if
$\sigma\in G$ and $T$ is an ideal of $S$ such that $T\not\subseteq
I_{\sigma}$, then $A_fTx_{\sigma}A_f=A_f$.

If $A_f$ is primary and $M$ is a maximal ideal of $S$, then $A_f=A_f\hat{M}x_1A_f$. Therefore if $G=\cup_{j=1}^rh_jD_M$ is a left coset decomposition, then $$S=\sum_j\hat{M}^{h_j}\left(\sum_{d\in D_M}f(h_jd,d^{-1}h_j^{-1})\right)$$ as in the proof of \cite[Theorem 3.2]{H}, so that, if we fix $i$, $1\leq i\leq r$, and localize at $M^{h_i}$, we get
$$S_{M^{h_i}}=\sum_{j\not=i}J(S_{M^{h_i}})\left(\sum_{d\in D_M}f(h_jd,d^{-1}h_j^{-1})\right)+
S_{M^{h_i}}\left(\sum_{d\in D_M}f(h_id,d^{-1}h_i^{-1})\right),$$ and hence $\sum_{d\in D_M}f(h_id,d^{-1}h_i^{-1})\not\in M^{h_i}$. So there is an element $d_i\in D_M$ such that
$f(h_id_i,d_i^{-1}h_i^{-1})\not\in M^{h_i}$. Let $g_i=d_i^{-1}h_i^{-1}$. Then $g_1,g_2,\ldots,g_r$ have the desired properties.

For the converse, suppose $\sigma\in G$ and $T$ is an ideal of $S$ such that $T\not\subseteq I_{\sigma}$. We need to show that $A_fTx_{\sigma}A_f=A_f$. Since $T\not\subseteq I_{\sigma}$, there is a maximal ideal $M$ of $S$ such that $f(\sigma,\sigma^{-1})\not\in M$ and $T\not\subseteq M$. The argument in \cite[Theorem 3.2]{H} shows that
$A_fTx_{\sigma}A_f\supseteq \sum_{i=1}^rT_i$, where $T_i=T^{g_i^{-1}}f^{g_i^{-1}}(\sigma,\sigma^{-1}g_i)f(g_i^{-1},g_i)$ are ideals of $S$ satisfying the condition $T_i\not\subseteq M^{g_i^{-1}}$. Inasmuch as $g_1^{-1},g_2^{-1},\ldots,g_r^{-1}$ form a complete set of \textit{left} coset representatives of $D_M$ in $G$, the ideal $\sum_{i=1}^rT_i$ is not contained in any maximal ideal of $S$. Therefore $\sum_{i=1}^rT_i=S$, and so $A_fTx_{\sigma}A_f=A_f$.

Using part (1) and the fact that $S\otimes_VV_h\cong
S_1\oplus S_2\oplus\cdots\oplus S_r$, we can construct a full set of
matrix units in $A_f\otimes_VV_h$ and hence verify part
(2)(a), as in the proof of \cite[Theorem 3.12]{H} (see also
the remark after \cite[Theorem 3.12]{H}). Part (2)(b) follows from (2)(a) and \cite[Lemma 3.1]{K1}; part
(2)(c) follows from (2)(a); and (2)(d)
follows from (2)(a) and Lemma~\ref{lem:Azumaya-lemma}.
\end{proof}

\section{The Main Results}
\label{sec:main-results}

We now give the main results of this paper. There are essentially
two parallel theories: one takes effect when $J(V)$ is a principal
ideal of $V$, and the other when it is not. In the former case, the
order $A_f$ displays characteristics akin to the situation when $V$
is a DVR. Our theory, however, yields surprising results in the
latter case. It turns out in this case that the property that $A_f$
is Azumaya over $V$ is equivalent to a much more weaker property:
that it is an extremal $V$-order in $\Sigma_f$.

\begin{proposition} \label{prop:Azumaya-prop}
The order $A_f$ is Azumaya over $V$ if and only if
$H=G$. \end{proposition}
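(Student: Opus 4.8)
The plan is to establish the two implications separately, relying on the explicit radical formula of Lemma~\ref{lem:radical-lemma}(4) and the primary-case analysis of Lemma~\ref{lem:primary-lemma}. For the direction $H=G\Rightarrow A_f$ Azumaya, I would first note that $H=G$ forces $f(\sigma,\sigma^{-1})\in U(S)$, hence $f(\sigma,\sigma^{-1})\notin M$ for every $\sigma\in G$ and every maximal ideal $M$ of $S$. Consequently the criterion of Lemma~\ref{lem:primary-lemma}(1) is met with any choice of coset representatives, so $A_f$ is primary; and for each maximal ideal $M$ and each $d\in D_M$ one has $f(d,d^{-1})\in U(S)\subseteq U(S_M)$, whence $H_M=D_M$. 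An application of Lemma~\ref{lem:primary-lemma}(2)(d) then gives that $A_f$ is Azumaya over $V$.

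For the converse, $A_f$ Azumaya $\Rightarrow H=G$, I would exploit the description of $J(A_f)$ rather than the decomposition-group data. Since $A_f$ is module-finite over the local ring $V$ and Azumaya, its radical is $J(A_f)=J(V)A_f$ (the reduction $A_f/J(V)A_f$ is central simple over the field $\overline{V}$, hence has zero radical, while $J(V)A_f\subseteq J(A_f)$ for a module-finite algebra). On the one hand $J(V)A_f=\sum_{\sigma\in G}J(V)S\,x_\sigma$; on the other hand Lemma~\ref{lem:radical-lemma}(4) gives $J(A_f)=\sum_{\sigma\in G}I_\sigma x_\sigma$. Comparing the $Sx_\sigma$-components forces $I_\sigma=J(V)S$ for every $\sigma\in G$. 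Now if some $\sigma$ had $\sigma\notin H$, then $f(\sigma,\sigma^{-1})\in M_0$ for some maximal ideal $M_0$, so by its very definition $I_\sigma=\bigcap_{f(\sigma,\sigma^{-1})\notin M}M$ is an intersection of maximal ideals all distinct from $M_0$; since $S$ is a semilocal Bézout domain its maximal ideals are pairwise comaximal, whence $I_\sigma\not\subseteq M_0$, contradicting $I_\sigma=J(V)S\subseteq M_0$. Therefore $H=G$.

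The step I expect to be the main obstacle is precisely this converse. The tempting route — transporting the unit condition on $f(\sigma,\sigma^{-1})$ from elements of the decomposition groups $D_M$ (where $H_M=D_M$ is already available from Lemma~\ref{lem:primary-lemma}(2)(d)) to an arbitrary $\sigma\in G$ by conjugation — breaks down, because the conjugation identities for $f(\sigma,\sigma^{-1})$ introduce extra cocycle factors $f(\tau,\rho)$ that are \emph{not} assumed to be units; this is exactly the recurring difficulty flagged in the introduction. The device that sidesteps it is to avoid conjugation entirely and read $H=G$ directly off the equality $J(A_f)=J(V)A_f$ via the componentwise radical formula, thereby reducing the whole matter to the elementary comparison $I_\sigma=J(V)S\Leftrightarrow\sigma\in H$ inside the Bézout domain $S$.
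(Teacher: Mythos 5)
Your proof is correct, but your ``only if'' direction takes a genuinely different route from the paper's. The paper proves Azumaya $\Rightarrow H=G$ by combining the primary criterion of Lemma~\ref{lem:primary-lemma}(1) with $H_M=D_M$ from Lemma~\ref{lem:primary-lemma}(2)(d), and then transporting the condition from $D_M$ to all of $G$ via exactly the conjugation identity
$f^{h^{-1}}(hg_i,g_i^{-1}h^{-1})f^{h^{-1}}(h,g_i)f^{g_i}(g_i^{-1},h^{-1})=f(h^{-1},h)f(g_i,g_i^{-1})$
that you dismiss. Your stated reason for rejecting that route is mistaken: one does not need the extra cocycle factors to be units, only to lie in $S$; since the right-hand side lies outside the prime ideal $M$ and all three left-hand factors lie in $S$, each factor -- in particular $h^{-1}(f(\sigma,\sigma^{-1}))$ -- lies outside $M$, and $h\in D_M$ gives $f(\sigma,\sigma^{-1})\notin M$. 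Your alternative, reading $H=G$ off the equality $J(A_f)=J(V)A_f$ (valid for a module-finite Azumaya algebra over the local ring $V$) against the componentwise formula $J(A_f)=\sum_\sigma I_\sigma x_\sigma$ of Lemma~\ref{lem:radical-lemma}(4), is nevertheless sound: $I_\sigma=J(V)S$ is incompatible with $f(\sigma,\sigma^{-1})\in M_0$, since then $I_\sigma$ is a finite intersection of maximal ideals distinct from $M_0$ and so cannot be contained in $M_0$, whereas $J(V)S\subseteq M_0$. This buys a more self-contained and elementary argument that bypasses the Henselization machinery behind Lemma~\ref{lem:primary-lemma}(2)(d) in that direction, at the cost of invoking the radical computation of Lemma~\ref{lem:radical-lemma}(4). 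Your ``if'' direction, routed through primariness and Lemma~\ref{lem:primary-lemma}(2)(d), is a legitimate expansion of what the paper waves off as well known.
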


\begin{proof}
Suppose $A_f$ is Azumaya over $V$. Let $M$ be a maximal ideal of
$S$. By Lemma~\ref{lem:primary-lemma}(1), there is a set of right coset representatives
$g_1,g_2,\ldots, g_r$ of $D_M$ in $G$ such that
$f(g_i,g_i^{-1})\not\in M$. If $\sigma\in G$, then $\sigma=hg_i$ for
some $h\in D_M$ and some $i$. Since $A_f$ is Azumaya, $H_M=D_M$ by
Lemma~\ref{lem:primary-lemma}(2)(d), hence we have $f(h^{-1},h)\not\in M$. Because
$$f^{h^{-1}}(hg_i,g_i^{-1}h^{-1})f^{h^{-1}}(h,g_i)f^{g_i}(g_i^{-1},h^{-1})=f(h^{-1},h)f(g_i,g_i^{-1}),$$
we conclude that $f(\sigma,\sigma^{-1})\not\in M$. Since $M$ is
arbitrary, $f(\sigma,\sigma^{-1})\in U(S)$ for every $\sigma\in G$,
so that $H=G$.

The converse is well-known and straightforward to demonstrate.

\end{proof}

It is perhaps instructive to compare the above proposition to
\cite[Theorem 3]{K3}.

Recall that $J(V)$ is a non-principal ideal of $V$ if and only if
$J(V)^2=J(V)$.

\begin{proposition} \label{prop:nonprincipal-characterization}
Suppose $J(V)$ is a non-principal ideal of $V$. Then the following
statements about the crossed-product order $A_f$ are equivalent:
\begin{enumerate}
    \item $A_f$ is an extremal $V$-order in $\Sigma_f$.
    \item $A_f$ is a semihereditary
    $V$-order.
    \item $A_f$ is a maximal $V$-order in $\Sigma_f$.
    \item $A_f$ is a B\'{e}zout $V$-order.
    \item $A_f$ is a valuation ring of
    $\Sigma_f$.
    \item $A_f$ is Azumaya over $V$.
\end{enumerate}
\end{proposition}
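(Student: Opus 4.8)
Proposal for proving Proposition 4.3 (the final statement, about the equivalences when $J(V)$ is non-principal).

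The plan is to establish a cycle of implications, exploiting the machinery already assembled in Section 2. First I would dispatch the cheap equivalences. By Lemma~\ref{lem:extremal-maximal-lemma}(1), (1) and (2) are equivalent outright, and by Lemma~\ref{lem:extremal-maximal-lemma}(2), (3), (4), and (5) are all equivalent to one another. Since a valuation ring of $\Sigma_f$ is in particular semihereditary, and a maximal order is extremal, we trivially get $(5)\Rightarrow(2)$ and $(3)\Rightarrow(1)$; and Azumaya algebras over valuation rings are semihereditary (indeed they are maximal orders), so $(6)\Rightarrow$ the others is routine. The entire content of the proposition therefore collapses to a single nontrivial implication: I must show that extremality (equivalently, semiheredity) forces $A_f$ to be Azumaya, i.e. that $(1)\Rightarrow(6)$.

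To prove $(1)\Rightarrow(6)$, by Proposition~\ref{prop:Azumaya-prop} it suffices to show $H=G$, that is, $f(\sigma,\sigma^{-1})\in U(S)$ for every $\sigma\in G$. Here is where the hypothesis $J(V)^2=J(V)$ does the decisive work, through Lemma~\ref{lem:overring-lemma}(2): that lemma tells me that when $J(V)$ is non-principal, any $t\in S$ lying outside $M^2$ for every maximal ideal $M$ is actually a \emph{unit} of $S$. So my task reduces to proving that extremality implies $f(\sigma,\sigma^{-1})\notin M^2$ for all $\sigma$ and all maximal $M$. I would argue by contraposition: assuming some $f(\sigma,\sigma^{-1})\in M^2$, I would build a proper $V$-order strictly enlarging $A_f$ while sharing (or enlarging into) its radical, contradicting extremality. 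The natural candidate is to rescale the offending basis element, replacing $x_{\sigma}$ by $k_{\sigma}x_{\sigma}$ with $k_{\sigma}\in K^{\#}$ chosen so that $k_{\sigma}\sigma(k_{\sigma^{-1}})f(\sigma,\sigma^{-1})$ becomes a unit; Lemma~\ref{lem:cross-prod-overring-lemma} guarantees that the resulting module is again a crossed-product order $A_g$ with $g\sim_K f$, and Lemma~\ref{lem:radical-lemma} (especially parts (3) and (4), linking the radical to the $I_\sigma$ and the condition $f(\sigma,\sigma^{-1})\notin M^2$) is what lets me compare radicals and confirm that the enlargement genuinely violates extremality.

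The main obstacle I anticipate is the book-keeping in this rescaling step: I must simultaneously enlarge the order and control what happens to $J(A_f)$ across \emph{all} maximal ideals $M$ at once, not just the bad one, and ensure the new module is still integral over $V$ (so that $g$ really maps into $S^{\#}$ and $A_g$ is a genuine $V$-order). A cleaner route that sidesteps some of this is to invoke the primary/non-primary dichotomy: either I first reduce to the primary case via Lemma~\ref{lem:primary-lemma}, where part (2)(c)--(d) let me transfer the whole question to the decomposition-field order $A_{f_M}$ over a valuation ring whose residue structure I understand, or I localize at a maximal ideal $M$ and work inside $S_M$, where $f(\sigma,\sigma^{-1})\in M^2$ translates directly into the failure of the radical equality $I_\sigma f(\sigma,\sigma^{-1})=J(V)S$ from Lemma~\ref{lem:radical-lemma}(3). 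Either way, the crux is to convert the algebraic condition ``$f(\sigma,\sigma^{-1})\in M^2$'' into the order-theoretic condition ``$A_f$ is properly contained in a $V$-order with no larger radical,'' and I expect that translation to be the technical heart of the argument, with the non-principality of $J(V)$ entering precisely to upgrade the conclusion from ``$f(\sigma,\sigma^{-1})\notin M^2$'' to the stronger ``$f(\sigma,\sigma^{-1})\in U(S)$.''
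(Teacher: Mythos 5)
Your reduction of the proposition to the single implication $(1)\Rightarrow(6)$ is sound, and your handling of the cheap equivalences via Lemma~\ref{lem:extremal-maximal-lemma} matches the paper. The difficulty is that the one implication carrying all the content is never actually proved. You reduce it (correctly, via Proposition~\ref{prop:Azumaya-prop} and Lemma~\ref{lem:overring-lemma}(2)) to showing that extremality forces $f(\sigma,\sigma^{-1})\notin M^2$ for all $\sigma$ and $M$, and then propose to prove this by contraposition, manufacturing from a relation $f(\sigma,\sigma^{-1})\in M^2$ a $V$-order $B\supsetneq A_f$ with $J(B)\supseteq J(A_f)$. You yourself flag this construction as ``the technical heart'' and leave it as a sketch. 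Note that when $J(V)$ is non-principal one has $J(S_M)^2=J(S_M)$, hence $M^2=M$; so what you must actually produce is a proper enlargement of $A_f$ with controlled radical whenever \emph{some} $f(\sigma,\sigma^{-1})$ is a non-unit, i.e.\ you must prove ``extremal $\Rightarrow H=G$'' directly. The rescaling recipe you outline has to be carried out coherently for all $\sigma,\tau$ and all maximal ideals simultaneously, so that $g(\sigma,\tau)=k_\sigma\sigma(k_\tau)k_{\sigma\tau}^{-1}f(\sigma,\tau)$ remains in $S^{\#}$ and the radical can be compared; in \cite{K4} the analogous construction uses uniformizers, which do not exist here. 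Nothing in the proposal indicates how to overcome this, so as written there is a genuine gap.

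The paper sidesteps the construction entirely by arguing in the opposite direction, proving $(1)\Rightarrow(5)\Rightarrow(6)$. For $(1)\Rightarrow(5)$ it takes a maximal $V$-order $B\supseteq A_f$, which by Lemmas~\ref{lem:fgextremal-lemma} and \ref{lem:cross-prod-overring-lemma} is a valuation ring of the form $\sum_{\sigma}Sk_\sigma x_\sigma$; extremality gives $J(B)\subseteq J(A_f)$, hence $J(V)B\subseteq A_f$, and the hypothesis enters through the one-line computation $J(V)B=J(V)^2B\subseteq J(V)A_f$, which forces $J(S)k_\sigma\subseteq J(S)$, hence $k_\sigma\in S$ and $B=A_f$. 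Then $(5)\Rightarrow(6)$ follows by Henselizing to reduce to the case where $S$ is a valuation ring (Lemma~\ref{lem:primary-lemma}(2)) and invoking Dubrovin's theorem to get $J(A_f)=J(V)A_f$, whence $H=G$ by Lemma~\ref{lem:Azumaya-lemma}. Your intended use of Lemma~\ref{lem:overring-lemma}(2) together with Proposition~\ref{prop:Azumaya-prop} is exactly how the paper obtains the Remark \emph{after} the proposition, as a consequence of it rather than a route to it; the missing step in your plan is precisely the content of the paper's $(1)\Rightarrow(5)$ argument.
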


\begin{proof} By Lemma~\ref{lem:extremal-maximal-lemma}, it suffices to demonstrate that $\textrm{(1)}
\Longrightarrow \textrm{(5)} \Longrightarrow \textrm{(6)}$. So suppose
$A_f$ is an extremal $V$-order. Let $B$ be a maximal $V$-order
containing $A_f$. By Lemma~\ref{lem:fgextremal-lemma}, $B$ is a valuation ring finitely
generated over $V$. By Lemma~\ref{lem:cross-prod-overring-lemma}, we get that $B=\sum_{\sigma\in
G}Sk_{\sigma}x_{\sigma}$ for some $k_{\sigma}\in K^{\#}$. Since
$A_f$ is extremal, we have $J(B)\subseteq J(A_f)$ by
\cite[Proposition 1.4]{K1}, so $J(V)B\subseteq A_f$. Therefore
$\sum_{\sigma\in G}J(S)k_{\sigma}x_{\sigma}=J(V)B=J(V)^2B\subseteq
J(V)A_f=\sum_{\sigma\in G}J(S)x_{\sigma}$, so that
$J(S)k_{\sigma}\subseteq J(S)$. Hence for each maximal ideal $M$ of
$S$, we have $S_MJ(S)k_{\sigma}\subseteq S_MJ(S)$, that is,
$J(S_M)k_{\sigma}\subseteq J(S_M)$. This shows that $k_{\sigma}\in
S_M$ for all $M$ and so $k_{\sigma}\in S$ for every $\sigma\in G$,
and thus $A_f=B$, a valuation ring.

Now suppose $A_f$ is a valuation ring of $\Sigma_f$. By Lemma
2.7(2), to show that $A_f$ is Azumaya over $V$, we may as well
assume $S$ is a valuation ring of $K$. By \cite[\S 2, Theorem
1]{D2}, $J(A_f)=J(V)A_f$, and so $A_f$ is Azumaya over $V$ by Lemma
2.6.

\end{proof}

\begin{remak} \label{rem:nonprincipal-criterion-remark}
It follows from Lemma~\ref{lem:overring-lemma}(2) and Proposition~\ref{prop:Azumaya-prop} that, if
$J(V)$ is a non-principal ideal of $V$, then the crossed-product
order $A_f$ is extremal if and only if for all
$\tau,\gamma\in G$ and every maximal ideal $M$ of $S$, $f(\tau,\gamma)\not\in M^2$.
\end{remak}

If $W$ is a valuation ring of $F$ such that $V\subsetneqq W$, then
we will denote by $B_f$ the $W$-order $WA_f=\sum_{\sigma\in
G}Rx_{\sigma}$, where $R=WS$ is the integral closure of $W$ in $K$ by
Lemma~\ref{lem:overring-lemma}. Recall that $W$ is also unramified and defectless in $K$.

\begin{proposition} \label{prop:principal-criterion}
Suppose $J(V)$ is a principal ideal of $V$. Then $A_f$ is
semihereditary if and only if for all
$\tau,\gamma\in G$ and every maximal ideal $M$ of $S$, $f(\tau,\gamma)\not\in M^2$.
\end{proposition}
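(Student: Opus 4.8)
The plan is to prove both directions of the equivalence, treating the case $J(V)$ principal which, per the paper's remarks, behaves like the classical DVR situation. The criterion to establish is that $A_f$ is semihereditary if and only if $f(\tau,\gamma)\notin M^2$ for all $\tau,\gamma\in G$ and all maximal ideals $M$ of $S$. By Lemma~\ref{lem:extremal-maximal-lemma}(1), semihereditary is equivalent to extremal, so throughout I would work with the extremal condition. The engine driving the argument will be the radical computation of Lemma~\ref{lem:radical-lemma}, which says $J(A_f)=\sum_{\sigma\in G}I_\sigma x_\sigma$, together with its part (3) asserting $I_\sigma f(\sigma,\sigma^{-1})=J(V)S$ precisely when $f(\sigma,\sigma^{-1})\notin M^2$ for every $M$.

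For the direction assuming the cocycle condition, I would first observe that the hypothesis on all $f(\tau,\gamma)$ specializes (taking $\gamma=\sigma^{-1}$) to $f(\sigma,\sigma^{-1})\notin M^2$, so Lemma~\ref{lem:radical-lemma}(3) gives $I_\sigma f(\sigma,\sigma^{-1})=J(V)S$ for every $\sigma$. Since $J(V)$ is principal, write $J(V)=\pi V$ for some $\pi$. The goal is to show $A_f$ is extremal, i.e.\ that no proper $V$-order $B\supseteq A_f$ has $J(B)\supseteq J(A_f)$. I would take such a $B$, maximal among $V$-orders containing $A_f$, invoke Lemma~\ref{lem:fgextremal-lemma} to conclude $B$ is a valuation ring finitely generated over $V$, and then use Lemma~\ref{lem:cross-prod-overring-lemma} to write $B=\sum_{\sigma\in G}Sk_\sigma x_\sigma$ with $k_\sigma\in K^{\#}$ and $k_1=1$. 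The crux is to force each $k_\sigma$ into $U(S)$, so that $B=A_f$. Here the principal-ideal hypothesis is what separates this from Remark~\ref{rem:nonprincipal-criterion-remark}: using $J(B)\subseteq J(A_f)$ and the identity $I_\sigma f(\sigma,\sigma^{-1})=\pi S$, I would compare the radical components of $B$ and $A_f$ and use that the $k_\sigma$ are integral (since $B$ is integral over $V$) and that $k_\sigma k_{\sigma^{-1}}$ is a unit multiple of $f(\sigma,\sigma^{-1})$; localizing at each maximal ideal $M$ and exploiting the non-vanishing of $f$ modulo $M^2$ should pin down $v_M(k_\sigma)=0$.

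For the converse, I would argue contrapositively: suppose $f(\tau,\gamma)\in M^2$ for some $\tau,\gamma$ and some maximal ideal $M$, and produce a proper overorder $B\supsetneq A_f$ with $J(B)\supseteq J(A_f)$, contradicting extremality. The natural candidate is obtained by replacing the basis element $x_{\tau\gamma}$ (or an appropriate one) by $\pi^{-1}x_{\tau\gamma}$-type adjustments, passing to a cohomologous cocycle $g\sim_K f$ that strictly enlarges the order while keeping the radical inside $J(A_f)$; the condition $f\in M^2$ is exactly what guarantees the resulting $A_g$ is still $V$-integral and contains $A_f$ properly. Constructing this $B$ concretely and verifying the radical containment is where I expect the main obstacle to lie, since one must track how the $M^2$-divisibility interacts with the localization $S_M$ and with the cocycle multiplication to ensure integrality is preserved. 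An alternative, and likely cleaner, route for the converse is to pass to a suitable proper overring $W$ of $V$ via Lemma~\ref{lem:overring-lemma} and Lemma~\ref{lem:fgextremal-lemma}(2): since $WA_f=B_f$ is always a valuation ring, one can leverage the behavior of $f(\tau,\gamma)$ modulo $M^2$ together with Lemma~\ref{lem:overring-lemma}(2) (which shows $t\notin M^2$ forces $t\in U(R)$) to detect failure of extremality by exhibiting the obstruction at the level of the maximal order sitting above $A_f$. The hardest part throughout is handling the case where $f$ does \emph{not} take values in $U(S)$, so that ordinary invertibility arguments break down and one must argue ideal-theoretically in the Bézout domain $S$ via its localizations.
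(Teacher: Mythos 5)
Your plan for the direction ``$f(\tau,\gamma)\notin M^2$ for all $\tau,\gamma,M$ implies $A_f$ semihereditary'' breaks down at its central step. You propose to take a $V$-order $B$ \emph{maximal} among those containing $A_f$, write $B=\sum_\sigma Sk_\sigma x_\sigma$, and force $k_\sigma\in U(S)$ so that $B=A_f$. That would prove $A_f$ is a \emph{maximal} order, which is strictly stronger than extremal and is false under the stated hypotheses: by Lemma~\ref{lem:extremal-maximal-lemma} and Corollary~\ref{cor:main-corollary}(1)(a), maximality of $A_f$ requires an additional coset condition beyond the $M^2$ criterion (already in the DVR case there are hereditary, non-maximal $A_f$). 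Moreover, invoking Lemma~\ref{lem:fgextremal-lemma} to conclude that $B$ is a finitely generated valuation ring is circular, since that lemma assumes the base order is extremal---exactly what you are trying to prove; and the inclusion $J(B)\subseteq J(A_f)$ you want to exploit is a \emph{consequence} of extremality (\cite[Proposition~1.4]{K1}), not a hypothesis available at that point. The paper instead proves $O_l(J(A_f))=A_f$, and the real work (which your sketch does not touch) is establishing that $O_l(J(A_f))$ is finitely generated over $V$ when $V$ is not noetherian: this is done by passing to the minimal overring $W=V_p$, $p=\cap_{n\geq 1} J(V)^n$, noting $B_f=WA_f$ is Azumaya over $W$ (Lemma~\ref{lem:overring-lemma} plus Proposition~\ref{prop:Azumaya-prop}), and reducing modulo $J(B_f)$ to an order over the DVR $\tilde V=V/J(W)$ where \cite[Theorem~10.3]{R} and \cite[Lemma~4.11]{M} apply.

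For the other direction you only gesture at a construction (``$\pi^{-1}x_{\tau\gamma}$-type adjustments'') and yourself flag it as the main obstacle; it is not carried out, and your fallback via $B_f$ cannot work because $f(\tau,\gamma)\in M^2$ is compatible with $f(\tau,\gamma)\in U(R)$ (e.g.\ $\pi^2\in U(R)$ since $J(W_1)\subsetneqq J(V_1)^2$), so the Azumaya property of $B_f$ cannot detect the $M^2$ condition. The paper's argument here is a genuine multi-stage reduction: the rank-one case is quoted from \cite{K4}; the indecomposed case passes to the crossed product $A_f/J(B_f)$ over the DVR $\tilde V$; the valuation-ring case reduces to the decomposition group via Lemma~\ref{lem:primary-lemma}; and the general semihereditary case writes $A_f$ as a finite intersection of valuation rings $A_g=\sum_\tau Sk_\tau x_\tau$ with $k_\sigma\in U(S_N)$ chosen suitably (using \cite[Theorem~2.5]{K2} and that $J(V)$ is principal), derives $J(V)S\subseteq k_\sigma I_\sigma f(\sigma,\sigma^{-1})$, and localizes at $N$ to reach the contradiction $J(S_N)\subseteq J(S_N)^2$. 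None of these reductions appears in your proposal, so the proof as outlined cannot be completed along the lines you describe.
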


\begin{proof}
The result holds when the Krull dimension of $V$ is one, by
\cite[Corollary]{K4}, since $V$ is a DVR in this case. So let us
assume from now on that the Krull dimension of $V$ is greater than
one.

Let $p=\cap_{n\geq 1}J(V)^n$. Then $p$ is a prime ideal of $V$,
$W=V_p$ is a minimal overring of $V$ in $F$, and $\tilde{V}=V/J(W)$
is a DVR of $\overline{W}$. Set $B_f=WA_f$, as above.

Suppose $A_f$ is semihereditary. We will show that for each $\tau\in G$ and each
maximal ideal $M$ of $S$, $f(\tau,\tau^{-1})\not\in M^2$.

First, assume that $V$ is indecomposed in $K$. By \cite[Proposition
2.6]{HM}, $A_f$ is primary, hence it is a valuation ring of
$\Sigma_f$. Therefore $B_f$ is Azumaya over $W$, by
\cite[Proposition 2.10]{HM}, and $f(G\times G)\subseteq U(R)$, by Proposition~\ref{prop:Azumaya-prop}.
Observe that $R$ is a valuation ring of
$K$ lying over $W$ and $\overline{R}$ is Galois over $\overline{W}$,
with group $G$, and $B_f/J(B_f)=\sum_{\sigma\in
G}\overline{R}\tilde{x}_{\sigma}$ is a crossed-product
$\overline{W}$-algebra. Further, $A_f/J(B_f)$ has center
$\tilde{V}$, a DVR of $\overline{W}$, and is a crossed-product
$\tilde{V}$-order in $B_f/J(B_f)$ of the type under consideration in this
paper, since $\tilde{V}$ is unramified in $\overline{R}$ and
$f(G\times G)\subseteq S\cap U(R)$. Since the
crossed-product $\tilde{V}$-order $A_f/J(B_f)$ is a valuation ring
of $B_f/J(B_f)$ hence hereditary, it follows from \cite[Theorem]{K4}
that for each $\tau\in G$, $f(\tau,\tau^{-1})\not\in J(S)^2$.

Suppose $V$ is not necessarily indecomposed in $K$, but assume $A_f$
is a valuation ring. Fix a maximal ideal $M$ of $S$. By Lemma~\ref{lem:primary-lemma}(1),
there is a set of right coset representatives $g_1,g_2,\ldots, g_r$
of $D_M$ in $G$ such that $f(g_i,g_i^{-1})\not\in M$. If $\tau\in
G$, then $\tau=hg_i$ for some $h\in D_M$ and some $i$. By Lemma~\ref{lem:primary-lemma}(2),
$A_{f_M}$ is a valuation ring of $\Sigma_{f_M}$. Hence, by the
preceding paragraph, $f_M(h^{-1},h)\not\in M^2$, and thus
$f(h^{-1},h)\not\in M^2$. But the following holds:
$$f^{h^{-1}}(hg_i,g_i^{-1}h^{-1})f^{h^{-1}}(h,g_i)f^{g_i}(g_i^{-1},h^{-1})=
f(h^{-1},h)f(g_i,g_i^{-1}).$$ Therefore we must have
$f(\tau,\tau^{-1})\not\in M^2$.

Now suppose that $A_f$ is not necessarily a valuation ring. To show
that for each $\tau\in G$ and each
maximal ideal $M$ of $S$ we have $f(\tau,\tau^{-1})\not\in M^2$,
 one only needs to emulate the corresponding
steps in the proof of \cite[Theorem]{K4}, equipped with the
following four observations: 1) Any maximal $V$-order containing
$A_f$ is a valuation ring, by Lemma~\ref{lem:fgextremal-lemma}, hence $A_f$ is the
intersection of finitely many valuation rings all with center $V$,
since $J(V)$ is a principal ideal of $V$, by \cite[Theorem 2.5]{K2}.
2) If $B$ is one such valuation ring containing $A_f$, then
$B=A_g=\sum_{\tau\in G}Sk_{\tau}x_{\tau}$ for
 some $k_{\tau}\in K^{\#}$, where $g:G\times G\mapsto S^{\#}$ is
 some normalized two-cocycle, by Lemma~\ref{lem:fgextremal-lemma}(1) and Lemma~\ref{lem:cross-prod-overring-lemma}. Fix
 $\sigma\in G$ and a maximal ideal $N$ of $S$. We may choose
 $B$ such that $k_{\sigma}\in U(S_N)$, as in the proof of \cite[Theorem]{K4}.
 3) Both $J(A_f)$ and $J(A_g)$ are as
 in Lemma~\ref{lem:radical-lemma}, that is, $J(A_f)=\sum_{\sigma\in
 G}I_{\sigma}x_{\sigma}$ (respectively $J(B_f)=\sum_{\sigma\in
 G}J_{\sigma}k_{\sigma}x_{\sigma}$) where $I_{\sigma}=\cap M$
 (respectively $J_{\sigma}=\cap M$), as $M$ runs through all
 maximal ideals of $S$ for which $f(\sigma,\sigma^{-1})\not\in M$
 (respectively $g(\sigma,\sigma^{-1})\not\in M$). We have $J(A_g)\subseteq J(A_f)$
 by \cite[Theorem 1.5]{K1}. 4) By Lemma~\ref{lem:radical-lemma},
 $I_{\sigma}^{\sigma^{-1}}=I_{\sigma^{-1}}$,
 $J_{\sigma}^{\sigma^{-1}}=J_{\sigma^{-1}}$, and
 $J_{\sigma^{-1}}g(\sigma^{-1},\sigma)=J(V)S$.

 We conclude, as in the proof of \cite[Theorem]{K4}, that
  \begin{eqnarray} \label{eq:eqnone}
 J(V)S\subseteq k_{\sigma}I_{\sigma}f(\sigma,\sigma^{-1}).
 \end{eqnarray}
 Since
 $k_{\sigma}\in U(S_N)$, if $f(\sigma,\sigma^{-1})\in N^2$ then, localizing both sides of \eqref{eq:eqnone} above at $N$ we get
 $J(S_N)\subseteq J(S_N)^2$, a contradiction, since $J(V)$ is a
 principal ideal of $V$. Therefore for each $\tau\in G$ and each
maximal ideal $M$ of $S$, $f(\tau,\tau^{-1})\not\in M^2$. Since the cocycle identity
$f^{\tau}(\tau^{-1},\tau\gamma)f(\tau,\gamma)=f(\tau,\tau^{-1})$ holds, we conclude that for all
$\tau,\gamma\in G$ and every maximal ideal $M$ of $S$, $f(\tau,\gamma)\not\in M^2$.

Conversely, suppose $f(\tau,\gamma)\not\in
M^2$ for all $\tau,\gamma\in G$, and every maximal ideal $M$ of $S$.
Let $O_l(J(A_f))=\{x\in\Sigma_f\mid xJ(A_f)\subseteq J(A_f)\}$. We
will first establish that $O_l(J(A_f))=A_f$, again emulating the
relevant steps in the proof of \cite[Theorem]{K4}. To achieve
this, it suffices to show that $O_l(J(A_f))=\sum_{\tau\in
G}Sk_{\tau}x_{\tau}$ for some $k_{\tau}\in K^{\#}$, and that
$I_{\tau}f(\tau,\tau^{-1})=J(V)S$ for each $\tau\in G$, where
$I_{\tau}$ is as in Lemma~\ref{lem:radical-lemma}. The second assertion follows from
Lemma~\ref{lem:radical-lemma}(3). As for the first one, we first note that $O_l(J(A_f))$
is a $V$-order in $\Sigma_f$, by \cite[Corollary 1.3]{K1}. By Lemma
2.5, $O_l(J(A_f))=\sum_{\tau\in G}Sk_{\tau}x_{\tau}$ for some
$k_{\tau}\in K^{\#}$ if and only if it is finitely generated over
$V$.

Since for all
$\tau,\gamma\in G$ and every maximal ideal $M$ of $S$ we have $f(\tau,\gamma)\not\in M^2$,
we conclude from Lemma~\ref{lem:overring-lemma} that
$f(G\times G)\subseteq U(R)$, hence $B_f$ is Azumaya over $W$.
Therefore $J(B_f)=J(W)B_f=J(W)(WA_f)=J(W)A_f\subseteq J(A_f)$, and
$A_f/J(B_f)$ is a $\tilde{V}$-order in $B_f/J(B_f)$. Since
$O_l(J(A_f))$ is a $V$-order containing $A_f$, $O_l(J(A_f))W$ is a
$W$-order containing $B_f$, so $O_l(J(A_f))W=B_f$, since $B_f$ is a
maximal $W$-order in $\Sigma_f$, and hence $O_l(J(A_f))\subseteq
B_f$. Therefore $O_l(J(A_f))/J(B_f)$ is a $\tilde{V}$-order in
$B_f/J(B_f)$, a central simple $\overline{W}$-algebra. Since
$\tilde{V}$ is a DVR of $\overline{W}$, $O_l(J(A_f))/J(B_f)$ must be
finitely generated over $\tilde{V}$, by \cite[Theorem 10.3]{R},
hence there exists $a_1,a_2,\ldots, a_n\in O_l(J(A_f))$ such that
$O_l(J(A_f))=a_1V+a_2V+\cdots+a_nV + J(B_f)=
a_1V+a_2V+\cdots+a_nV+A_f$, a finitely generated $V$-module. Thus
$O_l(J(A_f))=A_f$.

As in the proof of \cite[Lemma 4.11]{M}, we have
$$O_l(J(A_f/J(B_f)))=O_l(J(A_f)/J(B_f))=O_l(J(A_f))/J(B_f)=A_f/J(B_f),$$
where $O_l(J(A_f/J(B_f)))$ and $O_l(J(A_f)/J(B_f))$ are defined
accordingly.  Since $\tilde{V}$ is a DVR of $\overline{W}$,
$A_f/J(B_f)$ is a hereditary $\tilde{V}$-order in the central simple
$\overline{W}$-algebra $B_f/J(B_f)$, hence $A_f$ is semihereditary
by \cite[Lemma 4.11]{M}.
\end{proof}

We summarize these results as follows.

\begin{theorem} \label{thm:main-theorem}
Given a crossed-product order $A_f$,
\begin{enumerate}
    \item it is semihereditary if and only if
    for all
$\tau,\gamma\in G$ and every maximal ideal $M$ of $S$, $f(\tau,\gamma)\not\in M^2$; if and only if
    for each
$\gamma\in G$ and each maximal ideal $M$ of $S$, $f(\tau,\tau^{-1})\not\in M^2$.
    \item if $J(V)$ is a non-principal ideal of $V$, then $A_f$ is
    semihereditary if and only if it is Azumaya over $V$, if and only if $H=G$.
\end{enumerate}
\end{theorem}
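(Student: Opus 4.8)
The theorem is a consolidation of the results established earlier in this section, so the plan is to assemble each claimed chain of equivalences by citing the appropriate proposition or lemma for every link, treating the cases where $J(V)$ is principal and where it is non-principal separately in part~(1).

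For part~(1) I would first dispose of the equivalence between semihereditarity and the condition that $f(\tau,\gamma)\notin M^2$ for all $\tau,\gamma\in G$ and every maximal ideal $M$ of $S$, by splitting on $J(V)$. If $J(V)$ is non-principal, then Lemma~\ref{lem:extremal-maximal-lemma}(1) identifies semihereditarity with extremality, and the Remark following Proposition~\ref{prop:nonprincipal-characterization} characterizes extremality precisely by this $M^2$-condition. If $J(V)$ is principal, Proposition~\ref{prop:principal-criterion} already states exactly the same characterization of semihereditarity. Hence in either case $A_f$ is semihereditary if and only if $f(\tau,\gamma)\notin M^2$ for all $\tau,\gamma$ and all $M$. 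It then remains to see that this is equivalent to the \emph{a priori} weaker requirement that $f(\tau,\tau^{-1})\notin M^2$ for every $\tau\in G$ and every maximal ideal $M$. One direction is immediate on setting $\gamma=\tau^{-1}$; for the converse I would invoke the normalized cocycle identity $f^{\tau}(\tau^{-1},\tau\gamma)\,f(\tau,\gamma)=f(\tau,\tau^{-1})$. Since $f$ takes values in $S^{\#}$ and $\tau$ maps $S$ into $S$, both factors on the left lie in $S$; thus if $f(\tau,\gamma)\in M^2$ then the product lies in $M^2$, forcing $f(\tau,\tau^{-1})\in M^2$ and contradicting the hypothesis. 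This is exactly the reduction already performed at the close of the proof of Proposition~\ref{prop:principal-criterion}.

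Part~(2) is then a short chaining of two earlier results under the standing assumption that $J(V)$ is non-principal: Proposition~\ref{prop:nonprincipal-characterization} gives that $A_f$ is semihereditary if and only if it is Azumaya over $V$, and Proposition~\ref{prop:Azumaya-prop} gives that $A_f$ is Azumaya over $V$ if and only if $H=G$, and these concatenate to the stated equivalence.

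Because every substantive implication has already been proved, I do not anticipate a genuine obstacle here: the proof is essentially bookkeeping. The one point that warrants care is the passage between the two formulations of the $M^2$-condition, where one must apply the cocycle-identity argument for each maximal ideal $M$ separately and verify that every factor involved truly lies in $S$, and not merely in $K$, so that membership in $M^2$ is indeed preserved under multiplication.
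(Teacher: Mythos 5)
Your proposal is correct and matches the paper's intent exactly: the theorem is stated as a summary of Proposition~\ref{prop:Azumaya-prop}, Proposition~\ref{prop:nonprincipal-characterization} (with the Remark following it), and Proposition~\ref{prop:principal-criterion}, assembled by splitting on whether $J(V)$ is principal, with the two forms of the $M^2$-condition reconciled via the normalized cocycle identity just as at the end of the proof of Proposition~\ref{prop:principal-criterion}. The paper offers no separate argument beyond this bookkeeping, so there is nothing to add.
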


We now lump together several corollaries of the theorem above,
generalizing results in \cite{K4}.

\begin{corollary} \label{cor:main-corollary}
\begin{enumerate}
\item Given a crossed-product order $A_f$,
    \begin{enumerate}\item it is a valuation ring if and only if given any maximal ideal $M$ of $S$,
     $f(\tau,\tau^{-1})\not\in M^2$ for each $\tau\in G$, and there exists
    a set of right coset representatives $g_1,g_2,\ldots,g_r$ of $D_M$
    in $G$ (i.e., $G$ is the disjoint union $\cup_iD_Mg_i$) such that
    for all $i$, $f(g_i,g_i^{-1})\not\in M$.
    \item if $V$ is indecomposed in $K$, then it is a valuation ring if and only if  for each $\tau\in G$,
    $f(\tau,\tau^{-1})\not\in J(S)^2$.
    \end{enumerate}
\item Suppose the crossed-product order $A_f$ is primary.
Then it is a valuation ring if and only if there exists a maximal
ideal $M$ of $S$ such that for each $\tau\in D_M$, $f(\tau,\tau^{-1})\not\in
M^2$.
\item Suppose the crossed-product order $A_f$ is
semihereditary. Then $A_{f_{L,U}}$ is a semihereditary order in
$\Sigma_{f_{L,U}}$ for each intermediate field $L$ of $F$ and $K$,
and every valuation ring $U$ of $L$ lying over $V$.
\item Suppose the crossed-product order $A_f$ is semihereditary.
Then $A_{f_M}$ is a valuation ring of $\Sigma_{f_M}$ for each maximal
ideal $M$ of $S$.
\end{enumerate}
\end{corollary}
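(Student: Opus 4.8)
The plan is to treat the four assertions in turn, each time reducing to facts already in hand, and to keep in mind throughout that a valuation ring of $\Sigma_f$ is, by definition, a $V$-order that is simultaneously semihereditary and primary. Thus part (1)(a) should follow by simply conjoining two criteria: by Theorem~\ref{thm:main-theorem}(1), $A_f$ is semihereditary precisely when $f(\tau,\tau^{-1})\notin M^2$ for every $\tau\in G$ and every maximal ideal $M$ of $S$, while by Lemma~\ref{lem:primary-lemma}(1), $A_f$ is primary precisely when every maximal ideal $M$ admits right coset representatives $g_1,\dots,g_r$ of $D_M$ with $f(g_i,g_i^{-1})\notin M$; together these give exactly the stated condition. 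For part (1)(b) the point is that when $V$ is indecomposed in $K$ the ring $S$ is the unique valuation ring of $K$ over $V$, hence local with $J(S)$ its only maximal ideal and $D_M=G$, so the single coset $g_1=1$ (with $f(1,1)=1\notin J(S)$) already witnesses primaryness. Then $A_f$ is automatically primary, and being a valuation ring collapses to being semihereditary, i.e.\ to $f(\tau,\tau^{-1})\notin J(S)^2$ for all $\tau$.

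For part (2), assuming $A_f$ primary, I would invoke Lemma~\ref{lem:primary-lemma}(2)(c) to replace the phrase ``$A_f$ is a valuation ring'' by ``$A_{f_M}$ is a valuation ring'' for one (equivalently every) maximal ideal $M$. Since $K_M$ is the decomposition field of $M$, the base ring $U=S_M\cap K_M$ is indecomposed in $K$, so part (1)(b) applies to $A_{f_M}$ and rewrites the condition as $f_M(\tau,\tau^{-1})\notin J(S_M)^2$ for all $\tau\in D_M$. The only remaining ingredient is the localization identity $J(S_M)^2\cap S=M^2$, which is exactly the computation already carried out inside the proof of Lemma~\ref{lem:overring-lemma} (there phrased as $M^2=J(V_1)^2\cap S$); since each $f_M(\tau,\tau^{-1})=f(\tau,\tau^{-1})$ lies in $S$, this turns the criterion into $f(\tau,\tau^{-1})\notin M^2$ for $\tau\in D_M$, as desired.

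For part (3), Theorem~\ref{thm:main-theorem}(1) again does the work. If $A_f$ is semihereditary, then $f(\tau,\gamma)\notin M^2$ for all $\tau,\gamma\in G$ and every maximal ideal $M$ of $S$, and I must produce the analogous statement for $A_{f_{L,U}}$, namely $f(\tau,\gamma)\notin N^2$ for all $\tau,\gamma\in G_L$ and every maximal ideal $N$ of the integral closure $T$ of $U$ in $K$. Here the key observation is that every valuation ring of $K$ lying over $U$ also lies over $V$, so $T_N=S_M$ for $M=N\cap S$, whence the same localization argument as above yields $N^2\cap S=M^2$; since $f(\tau,\gamma)\in S$ and $G_L\subseteq G$, the full $G$-condition restricts to the required $G_L$-condition. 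Finally, part (4) is the instance $L=K_M$, $U=S_M\cap K_M$ of part (3) combined with part (1)(b): $A_{f_M}$ is semihereditary by (3), and its base $U$ is indecomposed in $K$, so (1)(b) (together with the automatic primaryness in the indecomposed case) forces $A_{f_M}$ to be a valuation ring.

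I expect the main obstacle to be the bookkeeping that transports the ``$\notin M^2$'' conditions across the ring extensions $S\subseteq T$ and $S\subseteq S_M$, i.e.\ the identities $N^2\cap S=M^2$ and $J(S_M)^2\cap S=M^2$. These are not deep — they parallel the claim $M^2=J(V_1)^2\cap S$ established in Lemma~\ref{lem:overring-lemma} — but they must be stated with care, using that $S$ is Pr\"ufer with fraction field $K$ and that the relevant localizations are genuine valuation rings of $K$ identified through their centers. Everything else is an assembly of Theorem~\ref{thm:main-theorem}, Lemma~\ref{lem:primary-lemma}, and the definition of a valuation ring.
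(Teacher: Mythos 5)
Your proof is correct and is essentially the intended assembly (the paper states this corollary without a separate proof): parts (1) and (2) conjoin Theorem~\ref{thm:main-theorem}(1) with Lemma~\ref{lem:primary-lemma}(1) and (2)(c) via the definition of a valuation ring as a semihereditary primary order, and parts (3) and (4) transport the ``$\not\in M^2$'' criterion along $S\subseteq T$ using the identity $M^2=J(S_M)^2\cap S$ established in the proof of Lemma~\ref{lem:overring-lemma}. I see no gaps.
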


We end by observing yet another peculiarity of these
crossed-product orders. The proposition below not only strengthens Lemma~\ref{lem:fgextremal-lemma}(2) when the $V$-order
$A$ is taken to be the crossed-product order $A_f$, but also generalizes \cite[Proposition
2.10]{HM} to the case where $V$ is not necessarily indecomposed in
$K$.

\begin{proposition} \label{prop:overring-prop}
Suppose the crossed-product order $A_f$ is
extremal and $W$ is a valuation ring of $F$ with $V\subsetneqq
W$. Then $WA_f$ is Azumaya over $W$.
\end{proposition}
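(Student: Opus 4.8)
The plan is to recognize $WA_f = B_f = \sum_{\sigma\in G}Rx_{\sigma}$ as a crossed-product order of exactly the same type studied throughout the paper, but now over the valuation ring $W$ with integral closure $R=WS$ and the \emph{same} cocycle $f$, and then to apply Proposition~\ref{prop:Azumaya-prop} to it directly. For this reuse to be legitimate one must check that $B_f$ meets the standing hypotheses: $R$ is the integral closure of $W$ in $K$ and $W$ is unramified and defectless in $K$, both of which are supplied by Lemma~\ref{lem:overring-lemma}(1). Once that is in hand, Proposition~\ref{prop:Azumaya-prop} (read with $(W,R)$ in place of $(V,S)$) asserts that $B_f$ is Azumaya over $W$ if and only if $H_W:=\{\sigma\in G\mid f(\sigma,\sigma^{-1})\in U(R)\}$ equals $G$.

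Next I would use the extremality hypothesis on $A_f$ to pin down the cocycle. Since $A_f$ is extremal, it is semihereditary by Lemma~\ref{lem:extremal-maximal-lemma}(1), so Theorem~\ref{thm:main-theorem}(1) gives $f(\sigma,\sigma^{-1})\not\in M^2$ for every $\sigma\in G$ and every maximal ideal $M$ of $S$. Feeding each element $t=f(\sigma,\sigma^{-1})$ into Lemma~\ref{lem:overring-lemma}(2) then yields $f(\sigma,\sigma^{-1})\in U(R)$ for all $\sigma\in G$, i.e.\ $H_W=G$. Combining this with the preceding paragraph, Proposition~\ref{prop:Azumaya-prop} immediately forces $B_f=WA_f$ to be Azumaya over $W$.

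Because all of the substantive work is already packaged in the earlier lemmas, there is essentially no obstacle in this argument: its entire content is the bookkeeping observation that $B_f$ is again a crossed-product order of the admissible form, together with the transfer of extremality of $A_f$ — via the $M^2$-criterion of Theorem~\ref{thm:main-theorem}(1) — to unit-valuedness of $f$ over $R$. The only point demanding a moment's care is ensuring that one is genuinely entitled to invoke Proposition~\ref{prop:Azumaya-prop} for $B_f$ rather than for $A_f$, that is, that the hypotheses of that proposition hold with $(W,R)$ substituted for $(V,S)$; this is precisely what Lemma~\ref{lem:overring-lemma}(1) secures.
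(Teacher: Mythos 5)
Your proof is correct and is essentially the paper's own argument: the paper's one-line proof ("follows from Lemma~\ref{lem:overring-lemma} and Theorem~\ref{thm:main-theorem}") is exactly the chain extremal $\Rightarrow$ semihereditary $\Rightarrow$ $f(\sigma,\sigma^{-1})\notin M^2$ $\Rightarrow$ $f(\sigma,\sigma^{-1})\in U(R)$ $\Rightarrow$ $WA_f$ Azumaya that you spell out. Your additional care in checking, via Lemma~\ref{lem:overring-lemma}(1), that $(W,R)$ satisfies the standing hypotheses so that Proposition~\ref{prop:Azumaya-prop} may be applied to $B_f$ is a worthwhile elaboration of a step the paper leaves implicit.
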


\begin{proof} This follows from Lemma~\ref{lem:overring-lemma} and Theorem~\ref{thm:main-theorem}. \end{proof}

 \bibliographystyle{amsplain}

\noindent
Department of Mathematics\\ Faculty of Science\\
Universiti Brunei Darussalam\\ Bandar Seri
Begawan BE1410\\ BRUNEI.
\\
%\textit{Email address:} john.kauta@ubd.edu.bn

\end{document}